\newcommand{\Spvek}[2][r]{%
  \gdef\@VORNE{1}
  \left(\hskip-\arraycolsep%
    \begin{array}{#1}\vekSp@lten{#2}\end{array}%
  \hskip-\arraycolsep\right)}
\def\vekSp@lten#1{\xvekSp@lten#1;vekL@stLine;}
\def\vekL@stLine{vekL@stLine}
\def\xvekSp@lten#1;{\def\temp{#1}%
  \ifx\temp\vekL@stLine
  \else
    \ifnum\@VORNE=1\gdef\@VORNE{0}
    \else\@arraycr\fi%
    #1%
    \expandafter\xvekSp@lten
  \fi}
\newtheorem{thm}{Theorem}[section]
\newtheorem{cor}[thm]{Corollary}
\newtheorem{rem}[thm]{Remark}
\theoremstyle{definition}
\newcommand{\scr}[1]{\mathscr #1}
\definecolor{wco}{rgb}{0.5,0.2,0.3}
\numberwithin{equation}{section} \theoremstyle{remark}
\newcommand{\ua}{\uparrow}
\title{{\bf   Harnack and Shift Harnack Inequalities for Degenerate (Functional) SPDEs with Singular Drifts}
}
\author{
{\bf     Xing Huang$^{a)}$, Wujun Lyu $^{b)}$   }\\
\footnotesize{  a)Center of Applied Mathematics, Tianjin University, Tianjin 300072, China}\\
\footnotesize{  xinghuang@tju.edu.cn}\\
\footnotesize{$^{b)}$         School of Mathematics, Shanghai University of Finance and Economics, Shanghai 200433, China}\\
\footnotesize{  lvwujunjaier@gmail.com}}
\begin{document}
\allowdisplaybreaks
\def\R{\mathbb R}  \def\ff{\frac} \def\ss{\sqrt} \def\B{\mathbf
B}
\def\N{\mathbb N} \def\kk{\kappa} \def\m{{\bf m}}
\def\ee{\varepsilon}\def\ddd{D^*}
\def\dd{\delta} \def\DD{\Delta} \def\vv{\varepsilon} \def\rr{\rho}
\def\<{\langle} \def\>{\rangle} \def\GG{\Gamma} \def\gg{\gamma}
  \def\nn{\nabla} \def\pp{\partial} \def\E{\mathbb E}
\def\d{\text{\rm{d}}} \def\bb{\beta} \def\aa{\alpha} \def\D{\scr D}
  \def\si{\sigma} \def\ess{\text{\rm{ess}}}
\def\beg{\begin} \def\beq{\begin{equation}}  \def\F{\scr F}
\def\Ric{\text{\rm{Ric}}} \def\Hess{\text{\rm{Hess}}}
\def\e{\text{\rm{e}}} \def\ua{\underline a} \def\OO{\Omega}  \def\oo{\omega}
 \def\tt{\tilde} \def\Ric{\text{\rm{Ric}}}
\def\cut{\text{\rm{cut}}} \def\P{\mathbb P} \def\ifn{I_n(f^{\bigotimes n})}
\def\C{\scr C}   \def\G{\scr G}   \def\aaa{\mathbf{r}}     \def\r{r}
\def\gap{\text{\rm{gap}}} \def\prr{\pi_{{\bf m},\varrho}}  \def\r{\mathbf r}
\def\Z{\mathbb Z} \def\vrr{\varrho} \def\ll{\lambda}
\def\L{\scr L}\def\Tt{\tt} \def\TT{\tt}\def\II{\mathbb I}
\def\i{{\rm in}}\def\Sect{{\rm Sect}}  \def\H{\mathbb H}
\def\M{\scr M}\def\Q{\mathbb Q} \def\texto{\text{o}} \def\LL{\Lambda}
\def\Rank{{\rm Rank}} \def\B{\scr B} \def\i{{\rm i}} \def\HR{\hat{\R}^d}
\def\to{\rightarrow}\def\l{\ell}\def\iint{\int}
\def\EE{\scr E}\def\no{\nonumber}
\def\A{\scr A}\def\V{\mathbb V}\def\osc{{\rm osc}}
\def\BB{\scr B}\def\Ent{{\rm Ent}}\def\3{\triangle}\def\H{\scr H}
\def\U{\scr U}\def\8{\infty}\def\1{\lesssim}\def\HH{\mathrm{H}}
 \def\T{\scr T}
\maketitle

\begin{abstract} The existence and uniqueness of the mild solutions for a class of degenerate functional SPDEs are obtained, where the drift is assumed to be H\"{o}lder-Dini continuous. Moreover, the non-explosion of the solution is proved under some reasonable conditions. In addition, the Harnack is derived by the coupling by change of measure. Finally, the shift Harnack inequality is obtained for the equations without delay, which is new even in the non-degenerate case.
\end{abstract} \noindent
 AMS subject Classification:\  60H10, 60H15, 34K26, 39B72.   \\
\noindent
 Keywords: H\"{o}lder-Dini continuous, Degenerate SPDEs, Zvonkin type transform, Functional SPDEs, Mild Solution, Harnack/shift Harnack Inequalities.
 \vskip 2cm

\section{Introduction}
The stochastic Hamiltonian system is an important model of degenerate diffusion system, which has been investigated in \cite{GW,W2,WZ1,WZ,Z1}. In this paper we aim to study functional version of this model (see \cite{BWY}) in infinite dimension. Recently, Zvonkin type transforms have been used to prove existence and uniqueness of SDEs and SPDEs with singular drift, see e.g. \cite{B,GM,P,FYW,ZV,Z}. In \cite{HW}, the author has investigated the non-degenerate functional SPDEs with Dini continuous drift, see also \cite{H} for the finite dimensional non-degenerate functional SDEs with integrable drifts. The purpose of this paper is to investigate the degenerate functional SPDEs with singular coefficients. We adopt the Zvonkin type transforms considered in \cite{WZ15} for SPDEs which enable us to regularize a singular drift without time delay. Therefore, in the functional SPDEs, we only allow the drift to be H\"{o}lder-Dini continuous in the present state, drift with delay being Lipschitz continuous, see {\bf(a4)} below. On the other hand, Harnack and shift Harnack inequalities have many applications, see \cite[Chapter 1]{Wbook}. \cite{FYW} prove the Log-Harnack inequalities for SPDEs with Dini drift by gradient estimate. Following this, using coupling by change of measure, \cite{HW} obtain the Log-Harnack inequalities for functional SDEs with Dini continuous drift. However, the noises are non-degenerate in both of these two cases. So far, there is no result on the Harnack or shift Harnack inequalities for degenerate SPDEs. In this paper, we construct coupling by change of measure to derive the Harnack and shift Harnack inequalities for a class of functional SPDEs with singular drift.

Let $\mathbb{H}_i$ (i=1,2,3) be three separable Hilbert spaces.
Let $\L\left(\mathbb{H}_i; \mathbb{H}_j\right)$ be the space of bounded linear operators from $\mathbb{H}_i$ to $\mathbb{H}_j$ ($1\leq i,j\leq 3$). For simplicity, we denote the norm and inner product by $|\cdot|$ and $\langle\cdot,\cdot\rangle$ for Hilbert spaces, and let $\|\cdot\|$ stand for the operator norm. Let $\mathbb{H}=\mathbb{H}_1\times\mathbb{H}_2$. For $\mathbb{H}\ni z=(x,y)\in\mathbb{H}_1\times\mathbb{H}_2$, $|z|^2:=|x|^2+|y|^2$.
To describe the time delay, let $\nu$ be a non-trivial  measure on $(-\infty,0)$ such that
\beq\label{nu} \nu \ \text{is\ locally\ finite\ and}\   \nu(\cdot-t)\le \kk(t)\nu(\cdot), \ \  t> 0\end{equation}   for some  increasing  function $\kk: (0,\infty)\to (0,\infty)$.
This condition is crucial to prove the pathwise (see the proof of Proposition  \ref{P-uni} below), and to determine  the state space of the segment solutions.  Obviously, \eqref{nu} holds for $\nu(\d\theta):=1_{(-\infty,0)}(\theta)\rr(\theta)\d\theta$ with density $\rr\ge 0$ satisfying
$\rr(\theta-t)\le\kk(t)\rr(\theta), t>0$ for $\theta<0,$ which is the case if, for instance, $\rr(\theta)= \e^{\ll\theta} 1_{[-r_0,0)}(\theta)$ for some constants
$\ll\in\R$ and $r_0\in (0,\infty).$
Then the  state space of the segment process under study  is given by
$$\C_\nu:= \bigg\{\xi: (-\infty,0]\to \mathbb{H} \ \text{is\ measurable\ with}\ \nu(|\xi|^2)<\infty\bigg\},$$
 where $\nu(f):=\int_{-\infty}^0 f(\theta)\nu(\d\theta)$ for $f\in L^1(\nu).$
 Let
\begin{align}\label{Cnu}\|\xi\|_{\C_\nu}:=\ss{ \nu (|\xi|^2) + |\xi(0)|^2},\ \ \xi\in \C_\nu.
\end{align}
Throughout the paper, we   identify  $\xi$ and $\eta$ in $\C_\nu$ if   $\xi=\eta\ \nu$-a.e. and $\xi(0)=\eta(0)$, so that
$\C_\nu$ is a separable Hilbert space with inner product
$$\<\xi,\eta\>_{\C_\nu}:= \nu(\<\xi,\eta\>) +\xi(0)\eta(0),\ \ \xi, \eta\in \C_\nu.$$
For a map $X: \R\to \mathbb{H}$ and $t\ge 0$, let $X_t: (-\infty, 0]\to \mathbb{H}$ be defined by
$$X_t(\theta)= X(t+\theta),\ \ \theta\in (-\infty,0].$$  We call $X_t$ the segment of $X$ at time $t$.
Similarly, for $i=1,2$, we denote $\C^i_\nu$ for $\mathbb{H}_i$ instead of $\mathbb{H}$. Clearly, for $\xi_i\in\C^i_\nu,i=1,2$, let $\xi(s)=(\xi_1(s),\xi_2(s)), s\in(-\infty,0]$, then $\xi\in\C_\nu$.

Different from the finite dimension case, the main difficulty in the proof of the pathwise uniqueness is that the new equation after Zvonkin transform contains an unbounded drift $-A_2u$, see \eqref{regular} below. To treat this term, we use the Fubini theorem and use $\|\cdot\|_{\C_\nu}$ for the delay part instead of the usual uniform norm or weighted uniform norm.

Let $W=(W(t))_{t\geq 0}$ be a cylindrical Brownian motion on $\mathbb{H}_3$ with respect to a complete filtration probability space $(\OO, \F, \{\F_{t}\}_{t\ge 0}, \P)$. More precisely, $W(\cdot)=\sum_{n=1}^{\infty}{\bar{W}^{n}(\cdot)h_{n}}$ for a sequence of independent one dimensional standard Brownian motions $\left\{\bar{W}^{n}(\cdot)\right\}_{n\geq 1}$ with respect to $(\OO, \F,
\{\F_{t}\}_{t\ge 0}, \P)$, where $\{h_{n}\}_{n\geq 1}$ is an orthonormal basis on $\mathbb{H}_3$.

Consider the following functional SPDE on $\mathbb{H}$:
\beq\label{IE1}
\begin{cases}
\d X(t)= \{A_1 X(t)+B Y(t)\}\d t, \\
\d Y(t)=\{A_2 Y(t)+b(t,X(t),Y(t))+F(t,X_t,Y_t)\}\d t+Q(t)\d W(t),
\end{cases}
\end{equation}
where $B\in\L\left(\mathbb{H}_2; \mathbb{H}_1\right)$, for any $i=1,2$, $(A_i,\D(A_i))$ is a bounded above linear operator generating a strongly continuous semigroup $\e^{tA_i}$ on $\mathbb{H}_i$, $F: [0,\infty)\times \C_\nu\to \mathbb{H}_2$, $b: [0,\infty)\times \mathbb{H}\to \mathbb{H}_2$ and $Q: [0,\infty)\to \L\left(\mathbb{H}_3; \mathbb{H}_2\right)$ are measurable and locally bounded (i.e. bounded on bounded sets). We will still use the same notations as in the finite dimension case, i.e.  $\nabla,\nabla^{(1)}$ and $\nabla^{(2)}$ denote
the gradient operators on $\mathbb{H}$, $\mathbb{H}_1$ and $\mathbb{H}_2$ respectively.

In general, the mild solution (if exists) to \eqref{IE1} can be explosive, so we consider mild solutions with life time.

\beg{defn}\label{D-solution'} A continuous $\mathbb{H}$-valued process $(X(t),Y(t))_{t\in(-\infty,\zeta)}$ is called a mild solution to \eqref{IE1} with life time $\zeta$, if the segment process $(X_t,Y_t)$ is $\F_t$-measurable, and $\zeta>0$ is a stopping time such that $\mathbb{P}$-a.s $\limsup_{t\uparrow\zeta}(|X(t)|+|Y(t)|)=\infty$ holds on $\{\zeta<\infty\}$, and $\mathbb{P}$-a.s
\begin{equation*}
\begin{cases}
X(t)= \e^{A_1(t\vee0)} X(t\wedge0)+\int_{0}^{t\vee0}\e^{A_1(t-s)}B Y(s)\d s, \\
Y(t)=\e^{A_2(t\vee0)} Y(t\wedge0)+\int_{0}^{t\vee0}\e^{A_2(t-s)}\{b(s,X(s),Y(s))+F(s,X_s,Y_s)\}\d s\\
\qquad\qquad+\int_{0}^{t\vee0}\e^{A_2(t-s)}Q(s)\d W(s),\ \  t\in[-r,\zeta).
\end{cases}
\end{equation*}
\end{defn}
Throughout this paper, let $\{e_{n}\}_{n\geq 1}$ be an orthonormal basis on $\mathbb{H}_2$. For any $n \geq 1$, let $\mathbb{H}^{(n)}_2:=\mathrm{span}\{e_1,\cdots, e_n\}$, $\pi^{(n)}_2$ be the orthogonal projection map from $\mathbb{H}_2$ to $\mathbb{H}^{(n)}_2$. Moreover, let $\mathbb{H}^{(n)}_1:=B\mathbb{H}^{(n)}_2$ and $\pi^{(n)}_1$ be the orthogonal projection map from $\mathbb{H}_1$ to $\mathbb{H}^{(n)}_1$. If $BB^\ast$ is invertible, we have $\lim_{n\to\infty}\pi^{(n)}_1x = x$ for $x\in\mathbb{H}_1$. Let
$$ \pi^{(n)}= (\pi^{(n)}_1, \pi^{(n)}_2) :\mathbb{H}^{(n)}:= \mathbb{H}^{(n)}_1\times\mathbb{H}^{(n)}_2.$$
In this section, we investigate the existence and uniqueness of \eqref{IE1} when $b$ is singular. To this end, we need the following assumptions, see \cite{WZ15} for details.
\beg{enumerate}
\item[{\bf (a1)}] $(-A_2)^{\varepsilon-1}$ is of trace class for some $\varepsilon \in(0,1)$; i.e. $\sum_{n=1}^{\infty}{\lambda_{n}^{\varepsilon-1}}<\infty$ for $0< \lambda_{1}\leq \lambda_{2}\leq\cdots$ being all eigenvalues of $-A_2$ counting multiplicities. The eigenbasis of $-A_2$ on $\mathbb{H}_2$ corresponding to the eigenvalues $\{\lambda_i\}_{i=1}^\infty$ is $\{e_i\}_{i=1}^\infty$.

\item[{\bf (a2)}]
\beg{enumerate}
\item[(i)] $Q\in C([0,\infty);\L(\mathbb{H}_3; \mathbb{H}_2))$ such that for every $t\geq 0$, $(Q Q^{\ast})(t)$ is invertible and $\|(Q Q^{\ast})^{-1}(t)\|$ is locally bounded in $t\geq 0$.

\item[(ii)] $BB^\ast$ is invertible in $\mathbb{H}_1$, and $B\e^{A_2 t} = \e^{A_1 t} \e^{A_0 t} B$ for some $A_0 \in \L(\mathbb{H}_1,\mathbb{H}_1)$ and all $t\geq 0$.

\item[(iii)]There exists $n_0 \geq1$ such that for any $n \geq n_0$, $\pi^{(n)}_1 B = B\pi^{(n)}_2$ on $\mathbb{H}_2$, and $\pi^{(n)}_1 A_1 =
A_1\pi^{(n)}_1$ on $\D(A_1)$.
\end{enumerate}
\end{enumerate}

To describe the singularity of $b$, we introduce
\beg{equation*}\beg{split}
\D= \Big\{\phi: [0,\infty)\to [0,\infty) \text{ is increasing}, \phi^{2} \text{ is concave}, \int_0^1{\frac{\phi(s)}{s}\d s}<\infty\Big\}
\end{split}\end{equation*}
\begin{enumerate}
\item[{\bf (a3)}] For any $n\geq1$, there exists $\phi_{n}\in\D$ and constants  $K_n>0$, $\alpha_n\in(\frac{2}{3},1)$ such that
\beq\label{ab}
\sup_{t\in[0,n], (x,y)\in \mathbb{H}, |(x,y)|\leq n}|b(t,x,y)|<\infty,
\end{equation}
and for any $ t\in[0,n], |(x,y)|\vee |(x',y')|\leq n$, it holds that
\beq\label{b-0}
|b(t,x,y)-b(t,x',y')|\leq \phi_{n}(|y-y'|)+K_n|x-x'|^{\alpha_n}.
\end{equation}
\end{enumerate}
\beg{rem}\label{1.1}
Obviously, the class $\D$ contains $\phi(s):=\frac{K}{\log^{1+\delta}(c+s^{-1})}$ for constants $K, \delta >0$ and large enough $c\geq \e$ such that $\phi^{2}$ is concave.
\end{rem}
Moreover, we need the following assumption on the delay part.
\beg{enumerate}
\item[{\bf (a4)}] $F\in C([0,\infty)\times \C_\nu; \mathbb{H}_2)$, and there exists an increasing function $C_F:[0,\infty)\to[0,\infty)$ such that for any $n\geq 1$,
\beg{equation*}\beg{split}
|F(t,\xi)-F(t,\eta)|\leq C_F(n)\|\xi-\eta\|_{\C_\nu},\ \ t\in[0,n], \xi,\eta\in\C_\nu, \|\xi\|_{\C_\nu}\vee\|\eta\|_{\C_\nu}\leq n.
\end{split}\end{equation*}
\end{enumerate}
When supp$\,\nu$ is $\{0\}$, i.e. the case without delay, {\bf (a1)}-{\bf (a3)} imply the existence and uniqueness of the mild solution to \eqref{IE1} by \cite[Theorem 1.1 (1)]{WZ15}.

Throughout the paper, the letter $C$ or $c$ will denote a positive constant, and $C(\theta)$ or $c(\theta)$ stands for a constant depending on $\theta$. The value of the constants may change from one appearance to another.

The paper is organized as follows: In Section 2, we prove the existence, uniqueness and non-explosion of the solution for degenerate functional SPDEs with singular drift; In Section 3, we investigate Harnack inequalities; In Section 4, we investigate the shift Harnack inequalities.
\section{Existence and Uniqueness}
The following theorem gives results on existence, uniqueness and non-explosion of the mild solution.
\beg{thm}\label{T-EU-NE} Assume {\bf (a1)}-{\bf (a4)}.
\beg{enumerate}
\item[$(1)$] For any $\F_0$-measurable initial value $(X_0,Y_0)$, the equation \eqref{IE1} has a unique mild solution $(X(t),Y(t))_{t\in(-\infty,\zeta)}$ with life time $\zeta$.
\item[$(2)$] If there exist two positive functions $\Phi, h:[0,\infty)\times[0,\infty)\to(0,\infty)$ increasing in each variable such that $\int_{1}^{\infty}\frac{\d s}{\Phi_{t}(s)}=\infty$ for any $t\geq0$ and
\beq\label{Bb-phi}\begin{split}
&\langle F(t,\xi,\eta+\eta')+b(t,\xi(0),(\eta+\eta')(0)),\eta(0)\rangle \\
&\leq\Phi_{t}\left(\|\xi\|_{\C^1_\nu}^{2}+\|\eta\|_{\C^2_\nu}^{2}\right)+h_{t}(\|\eta'\|_{\C^2_\nu}), \ \ \xi\in\C^1_\nu, \eta,\eta'\in\C^2_\nu, t\geq 0,
\end{split}\end{equation}
then the mild solution is non-explosive.
\end{enumerate}
\end{thm}
To apply Zvonkin type transform, we in fact need some global version of {\bf (a3)}-{\bf (a4)}, and Theorem \ref{T-EU-NE} can be proved by localization method.

For any $T>0$, let $\|\cdot\|_{T,\infty}$ denote the uniform norm on $[0,T]\times\mathbb{H}$ or $[0,T]\times\C_\nu$.
\beg{enumerate}
\item[$\bf{(a3^{'})}$] For any $T>0$, there exists $\phi\in\D$ and constants  $K>0$, $\alpha\in(\frac{2}{3},1)$ such that
\beq\label{b-T-a}
\| b\|_{T,\infty}<\infty,
\end{equation}
and
\beq\label{b-phi0}
|b(t,x,y)-b(t,x',y')|\leq K|x-x'|^{\alpha}+\phi(|y-y'|),\quad t\in[0,T], x,x'\in\mathbb{H}_1, y,y'\in\mathbb{H}_2.
\end{equation}
\item[$\bf{(a4^{'})}$] For any $t\geq 0$, $\|F\|_{t,\infty}<\infty$. $F$ satisfies {\bf (a4)}, and there exists an increasing function $C_F^{'}:[0,\infty)\to[0,\infty)$ such that for any $T\geq 0$,
\beg{equation*}\beg{split}
|F(t,\xi)-F(t,\eta)|\leq C_F^{'}(T)\|\xi-\eta\|_{\C_\nu},\ \ t\in[0,T], \xi,\eta\in\C_\nu.
\end{split}\end{equation*}
\end{enumerate}
\subsection{Regularization transform}
In this subsection, we transform \eqref{IE1} to a regular equation, the pathwise uniqueness of which is equivilant to that of \eqref{IE1}. To this end, the regularity of the solution to the equation \eqref{u0} (Lemma \ref{L-PDE}) is crucial, which has been proved in  \cite{WZ15}.

Consider the following SPDEs:
\beq\label{E-A-Q}
\begin{cases}
\d X^0_{s,t}(x,y)= \{A_1 X^0_{s,t}(x,y)+B Y^0_{s,t}(x,y)\}\d t, \\
\d Y^0_{s,t}(x,y)=A_2 Y^0_{s,t}(x,y)\d t+Q(t)\d W(t),\ \ (X^0_{s,s}, Y^0_{s,s})(x,y)=(x,y), t\geq s\geq 0.
\end{cases}
\end{equation}

Under {\bf (a1)}, ${\bf (a2)}$, \eqref{E-A-Q} has a unique mild solution $\{(X^0_{s,t}, Y^0_{s,t})(x,y)\}_{t\geq s}$. Let $P_{s,t}^{0}$ be the associated Markov semigroup, i,e.
\begin{equation*}
P_{s,t}^{0}f(x,y)=\mathbb{E}f(X^0_{s,t}(x,y), Y^0_{s,t}(x,y)), \ \ f\in\B_b(\mathbb{H}), t\geq s\geq 0.
\end{equation*}
As in \cite{WZ15}, to transform \eqref{IE1} to a regular equation, we need to study the regularity of the following equation:
\beq\label{u0}
u(s,\cdot)=\int_{s}^{T} \e^{-\lambda(t-s)}P_{s,t}^{0}(\nabla^{(2)}_{b(t,\cdot)}u(t,\cdot)+b(t,\cdot))\d t, \ \ s\in[0,T].
\end{equation}
The following Lemma is from \cite[Proposition 3.1]{WZ15}.
\beg{lem}\label{L-PDE} Assume {\bf (a1)}, ${\bf (a2)}$,  $\bf{(a3^{'})}$. Let $T>0$ be fixed. Then there exists a constant $\lambda_0>0$ such that for any $\lambda\geq\lambda_0$, the equation \eqref{u0} has a unique solution $u^\lambda\in C([0,T]; C_{b}^{1}(\mathbb{H}; \mathbb{H}_{2}))$ and
\beq\label{g1}
\lim_{\lambda\to \infty}\| u^\lambda\|_{T,\infty}+\|\nabla  u^\lambda\|_{T,\infty}+\left\|\nabla\nabla^{(2)} u^\lambda\right\|_{T,\infty}=0.
\end{equation}
\end{lem}
The next lemma gives a regular representation of \eqref{IE1}.
\beg{lem}\label{L-regular} Assume {\bf (a1)}, ${\bf (a2)}$, $b\in\B_b([0,T]; C_b(\mathbb{H},\mathbb{H}_2))$ and $F\in\B_b([0,T]; C_b(\C_\nu,\mathbb{H}_2))$ for some $T\geq 0$ . If $\{Z(t)\}_{t\in(-\infty,T\wedge\tau]}=\{X(t),Y(t)\}_{t\in(-\infty,T\wedge\tau]}$ solves \eqref{IE1} for some stopping time $\tau$, i.e. $\mathbb{P}$-a.s.
\begin{equation*}
\begin{cases}
X(t)= \e^{A_1(t\vee0)} X(t\wedge0)+\int_{0}^{t\vee0}\e^{A_1(t-s)}B Y(s)\d s, \\
Y(t)=\e^{A_2(t\vee0)} Y(t\wedge0)+\int_{0}^{t\vee0}\e^{A_2(t-s)}\{b(s,X(s),Y(s))+F(s,X_s,Y_s)\}\d s\\
\qquad\qquad+\int_{0}^{t\vee0}\e^{A_2(t-s)}Q(s)\d W(s),\ \  t\in(-\infty,T\wedge\tau],
\end{cases}
\end{equation*}
then for any $\lambda\geq\lambda_0$, there holds $\mathbb{P}$-a.s.
\beq\label{regular} \beg{split}
Y(t)&= \e^{A_2t} [Y(0)+u^\lambda(0,Z(0))]-u^\lambda(t,Z(t))\\
&+\int_{0}^{t}(\lambda-A_2)\e^{A_2(t-s)}u^\lambda(s,Z(s))\d s\\
&+\int_{0}^{t}\e^{A_2(t-s)}[\mathrm{I}_{\mathbb{H}_2}+\nabla^{(2)} u^\lambda(s,Z(s))]F(s,Z_{s})\d s\\
&+\int_{0}^{t} \e^{A_2(t-s)}[\mathrm{I}_{\mathbb{H}_2}+\nabla^{(2)} u^\lambda(s,Z(s))]Q(s)\d W(s), \quad t\in[0,\tau\wedge T],
\end{split}\end{equation}
where $\mathrm{I}_{\mathbb{H}_2}$ stands for the identical operator on $\mathbb{H}_2$, $u^\lambda$ solves \eqref{u0}, and $\nabla^{(2)} u(s,z)v:=[\nabla^{(2)}_{v} u(s,\cdot)](z)$ for $v\in\mathbb{H}_2$, $z\in\mathbb{H}$.
\end{lem}
\begin{proof}
Since $F\in\B_b([0,T]; C_b(\C_\nu,\mathbb{H}_2))$, simulating the proof of \cite[Theorem 4.1]{WZ15}, it is easy to obtain the result as desired. To save space, we omit the detail here.
\end{proof}
Now, we present a complete proof of the pathwise uniqueness to \eqref{IE1}.
\beg{prp}\label{P-uni} Assume {\bf (a1)}, {\bf (a2)}, ${\bf (a3^{'})}$-${\bf (a4^{'})}$. Let $\{Z_t\}_{t\geq 0}:=\{(X_t,Y_t)\}_{t\geq 0},\{\tilde{Z}_t\}_{t\geq 0}:=\{(\tilde{X}_t,\tilde{Y}_t)\}_{t\geq 0}$ be two adapted continuous $\C_\nu$-valued processes with $Z_{0}=\tilde{Z}_{0}=\xi\in\C_\nu$. For any $m\geq 1$, let
\beg{equation*}
\tau_{m}^{Z}=m\wedge \inf\{t\geq 0:|Z(t)|\geq m\}, \ \ \tau_{m}^{\tilde{Z}}=m\wedge \inf\{t\geq 0:|\tilde{Z}(t)|\geq m\}.
\end{equation*}
If $Z(t)$ and $\tilde{Z}(t)$ are mild solutions to \eqref{IE1} for $t\in(-\infty,\tau_{m}^{Z}\wedge\tau_{m}^{\tilde{Z}}]$,
then $\mathbb{P}$-a.s. $Z(t)=\tilde{Z}(t)$, for all $t\in(-\infty,\tau_{m}^{Z}\wedge\tau_{m}^{\tilde{Z}}]$. In particular, $\mathbb{P}$-a.s. $\tau_{m}^{Z}=\tau_{m}^{\tilde{Z}}$ for $m\geq 1$.
\end{prp}
\beg{proof}[Proof] For any $m\geq 1$, let $\tau_{m}=\tau_{m}^{Z}\wedge\tau_{m}^{\tilde{Z}}$. It suffices to prove that for any $T>0$ and $m\geq1$,
\beq\label{3.5}
\int_{0}^{T} \mathbb{E}\big\{1_{\{s<\tau_{m}\}}|Z(s)-\tilde{Z}(s)|^{2}\big\}\d s=0.
\end{equation}
Let $\lambda\geq\lambda_0$ be such that assertions in Lemma \ref{L-PDE} and Lemma \ref{L-regular} hold.
By \eqref{regular} for $\tau=\tau_{m}$, we have $\mathbb{P}$-a.s.
\beq\label{X1}Y(t)-\tilde{Y}(t)= \LL(t)+\Xi(t),\ \ t\in [0,\tau_m\land T],\end{equation}
where
\beg{equation*} \beg{split}\LL(t)&:= \int_{0}^{t}\e^{A_2(t-s)}\big\{[\mathrm{I}_{\mathbb{H}_2}+\nabla^{(2)} u^\lambda(s,Z(s))]F(s,Z_{s})\\
&\qquad\qquad\qquad\qquad-[\mathrm{I}_{\mathbb{H}_2}+\nabla^{(2)} u^\lambda(s,\tilde{Z}(s))]F(s,\tilde{Z}_{s})\big\}\d s,\\
\Xi(t)&:= u(t,Z(t))-u(t,\tilde{Z}(t))\\
+&\int_{0}^{t}(\lambda-A_2)\e^{A_2(t-s)}[u^\lambda(s,Z(s))-u^\lambda(s,\tilde{Z}(s))]\d s\\
+&\int_{0}^{t} \e^{A_2(t-s)}[\nabla^{(2)} u^\lambda(s,Z(s))-\nabla^{(2)} u^\lambda(s,\tilde{Z}(s))]Q(s)\d W(s),\ \ t\in[0,\tau_{m}\wedge T].\end{split}\end{equation*}
According to the proof of \cite[Corollary 4.2]{WZ15}, when $\ll\ge\ll_0$ is large enough there exists a constant $C_0\in (0,\infty)$ such that
\beq\label{X2}  \int_0^r \e^{-2\ll t} \E\big[1_{\{t<\tau_m\}} |\Xi(t)|^2\big]\,\d t\le \ff 3 4 \GG(r) + C_0 \int_0^r \GG(t)\d t,\ \ r\in [0,T]\end{equation} holds for
\beq\label{X3} \GG(t):= \int_0^t \e^{-2\ll s}  \E\big[1_{\{s<\tau_m\}} |Z(s)-\tilde{Z}(s)|^2\big]\,\d s,\ \ t\in [0,T].\end{equation}
So, to prove \eqref{3.5}, it remains to estimate the corresponding term for $\LL(t)$ in place of $\Xi(t).$
Noting that $Z_0=\tilde{Z}_0$ in $\C_\nu$ implies $Z=\tilde{Z}\ \nu$-a.e. on $(-\infty,0)$, by \eqref{nu} we have
$$\int_{-\infty}^{-s} |Z(s+q)-\tilde{Z}(s+q)|^2 \nu(\d q) =\int_{-\infty}^0 |Z(\theta)-\tilde{Z}(\theta)|^2 \nu(\d\theta-s)=0,\ \ s\ge 0.$$
So, by $\|\e^{A_2(t-s)}\|\le 1$ for $t\ge s$, Lemma \ref{L-PDE}, ${\bf(a4')}$ and the Fubini Theorem, when $\ll\ge\ll_0$ is large enough, we may find constants $C_1, C_2\in (0,\infty)$ such that
  \beg{equation*}\beg{split} |\LL(t)|^2 &
 \leq C_1\int_{0}^{t}\big\{|F(s,Z_{s})-F(s,\tilde{Z}_{s})|^{2}+|Z(s)-\tilde{Z}(s)|^2\big\} \d s\\
&\leq C_2 \int_{0}^{t}|Z(s)-\tilde{Z}(s)|^{2}\d s+C_2\int_{0}^{t}\d s\int_{-\infty}^{0}|Z(s+q)-\tilde{Z}(s+q)|^{2}\nu(\d q) \\
&=  C_2 \int_{0}^{t}|Z(s)-\tilde{Z}(s)|^{2}\d s+C_2\int_{0}^{t}\d s\int_{-s}^{0}|Z(s+q)-\tilde{Z}(s+q)|^{2}\nu(\d q) \\
&=  C_2 \int_{0}^{t}|Z(s)-\tilde{Z}(s)|^{2}\d s+C_2\int_{-t}^{0}\nu(\d q)\int_{-q}^t|Z(s+q)-\tilde{Z}(s+q)|^{2}\d s \\
&\le K(T) \int_0^t |Z(s)-\tilde{Z}(s)|^2\d s,\ \ t\in [0,T],
\end{split}\end{equation*} where $K(T):= C_2+C_2 \nu([-T,0))<\infty$ since $\nu$ is locally finite by \eqref{nu}.  Thus,
\beg{equation*}\beg{split} \int_0^r \e^{-2\ll t} \E\big[1_{\{t<\tau_m\}} |\LL(t)|^2\big]\,\d t&\le K(T) \E \int_0^r \e^{-2\ll t}1_{\{t<\tau_m\}}\d t \int_0^t |Z(s)-\tilde{Z}(s)|^2\d s\\
&\le K(T)\int_0^r \GG(t)\d t,\ \ r\in [0,T].\end{split}\end{equation*}
As a consequence, we have
\begin{align*}
&\int_0^r \e^{-2\ll s}  \E\big[1_{\{s<\tau_m\}} |Y(s)-\tilde{Y}(s)|^2\big]\,\d s\\
&\leq \int_0^r \e^{-2\ll t}  \E\Big\{1_{\{t<\tau_m\}}\Big(8|\LL(t)|^2+ \ff 8 7 |\Xi(t)|^2\Big)\Big\}\,\d t\\
&\leq \ff 6 7 \GG(r) +\ff 87 C_0\int_0^r\GG(t)\d t + 8 K(T) \int_0^r\GG(t)\d t, \ \ r\in[0,T].
\end{align*}
Since
\begin{equation*}\begin{split}
|Z(s)-\tilde{Z}(s)|^{2}&=|X(s)-\tilde{X}(s)|^2+|Y(s)-\tilde{Y}(s)|^2,
\end{split}\end{equation*}
and
\begin{equation}\label{X-Y1}\beg{split}
&|X(t\wedge\tau_{m})-\tilde{X}(t\wedge\tau_{m})|^{2}\leq C(T)\int_{0}^{t}|Y(s\wedge\tau_{m})-\tilde{Y}(s\wedge\tau_{m})|^{2}\d s,
 \quad t\in[0,T],
\end{split}\end{equation}
it follows from the Fubini theorem that
\begin{align*}
&\int_0^t \e^{-2\ll s}  \E\big[1_{\{s<\tau_m\}} |X(s)-\tilde{X}(s)|^2\big]\,\d s\\
&\leq C(T)\E \int_0^t\e^{-2\ll s} \d s\int_{0}^{s}|Z(r\wedge\tau_{m})-\tilde{Z}(r\wedge\tau_{m})|^{2}\d r\\
&\leq C(T)\int_0^t\GG(s)\d s,\quad t\in[0,T].
\end{align*}
Combining this with \eqref{X1}-\eqref{X3}, we arrive at
\beg{equation*}\beg{split} \GG(r)
&\le \ff 6 7 \GG(r) +\ff 87 C_0\int_0^r\GG(t)\d t + 8 K(T) \int_0^r\GG(t)\d t+C(T) \int_0^r\GG(t)\d t\\
&\le \ff 6 7 \GG(r) + 8(C_0+K(T)+C(T)) \int_0^r\GG(t)\d t,\ \ r\in [0,T].\end{split}\end{equation*}
Since by the definitions of $\GG$ and $\tau_m$ we have $\GG(t)<\infty$ for $t\in [0,T]$, it follows from   Gronwall's  inequality that  $\GG(T)=0$. Therefore,   \eqref{3.5} holds and the proof is finished.
\end{proof}

\subsection{Proof of Theorem \ref{T-EU-NE}}
\beg{proof}[Proof of Theorem \ref{T-EU-NE}]
(a) We first assume that ${\bf(a1)}$, ${\bf(a2)}$, ${\bf(a3')}$-${\bf(a4')}$ hold. Consider the following SPDE on $\mathbb{H}$:
\begin{equation*}
\begin{cases}
\d X^{\xi}(t)= \{A_1 X^{\xi}(t)+B Y^{\xi}(t)\}\d t, \\
\d Y^{\xi}(t)=A_2 Y^{\xi}(t)\d t+Q(t)\d W(t),\ \ (X^{\xi}(0),Y^{\xi}(0))=\xi(0).
\end{cases}
\end{equation*}
It is easy to see that the above equation has a uniqueness non-explosive mild solution:
\begin{equation*}
\begin{cases}
X^{\xi}(t)= \e^{A_1(t)} X^{\xi}(0)+\int_{0}^{t}\e^{A_1(t-s)}B Y^{\xi}(s)\d s, \\
Y^{\xi}(t)=\e^{A_2(t)} Y^{\xi}(0)+\int_{0}^{t}\e^{A_2(t-s)}Q(s)\d W(s),\ \  t\geq 0.
\end{cases}
\end{equation*}
Letting $(X^\xi_0, Y^\xi_0)=\xi$ and taking
\beg{equation*}\beg{split}
&W^{\xi}(t)=W(t)-\int_{0}^{t}\psi(s)\d s,\\
&\psi(s)= \big\{Q^{\ast}(QQ^{\ast})^{-1}\big\}(s) \big\{b(s,X^\xi(s),Y^\xi(s))+ F(s,X_s^\xi,Y_s^\xi)\big\},\ \ s,t\in[0,T],\end{split}
\end{equation*}
we have
\begin{equation*}
\begin{cases}
X^{\xi}(t)= \e^{A_1(t)} X^{\xi}(0)+\int_{0}^{t}\e^{A_1(t-s)}B Y^{\xi}(s)\d s, \\
Y^{\xi}(t)=\e^{A_2(t)} Y^{\xi}(0)+\int_{0}^{t}\e^{A_2(t-s)}\{b(s,X^{\xi}(s),Y^{\xi}(s))+F(s,X^{\xi}_s,Y^{\xi}_s)\}\d s\\
\qquad\qquad+\int_{0}^{t}\e^{A_2(t-s)}Q(s)\d W^{\xi}(s),\ \  t\in[0,T].
\end{cases}
\end{equation*}
Since $\|F\|_{T,\infty}+\|b\|_{T,\infty}<\infty$, Girsanov theorem implies that $\{W^{\xi}(t)\}_{t\in[0,T]}$ is a cylindrical Brownian motion on $\mathbb{H}_3$ under probability $\d\mathbb{Q}^{\xi}=R^{\xi}\d \mathbb{P}$, where
\beg{equation*}
R^{\xi}:=\exp\left[\int_{0}^{T}\big\langle \psi(s),\d W(s)\big\rangle-\frac{1}{2}\int_{0}^{T}\big|\psi(s)\big|^{2}\d s\right].
\end{equation*}
Then, under the probability $\Q^\xi$,  $((X^{\xi}(t),Y^{\xi}(t)),W^{\xi}(t))_{t\in[0,T]}$ is a weak mild solution to \eqref{IE1}. On the other hand, by Proposition \ref{P-uni},   the pathwise uniqueness holds for the mild solution to \eqref{IE1}. So, by the Yamada-Watanabe principle, see \cite{YW}, the equation \eqref{IE1} has a unique mild solution. Moreover, in this case the solution is non-explosive.

(b) In general, take $\psi\in C_{b}^{\infty}([0,\infty))$ such that $0\leq \psi\leq 1$, $\psi(u)=1$ for $u\in[0,1]$ and $\psi(u)=0$ for $u\in[2,\infty)$. For any $m\geq1$, let
\beg{equation*}\begin{split}
&b^{[m]}(t,z)=b(t\wedge m,z)\psi(|z|/m),\ \ (t,z)\in[0,\infty)\times\mathbb{H},\\
&F^{[m]}(t,\xi)=F(t\wedge m,\xi)\psi(\|\xi\|_{\C_\nu}/m),\ \ (t,\xi)\in[0,\infty)\times\C_\nu.
\end{split}\end{equation*}
By {\bf (a3)}-{\bf (a4)} and the local boundedness of $F$, we know $F^{[m]}$ and $b^{[m]}$ satisfy ${\bf (a3^{'})}-{\bf (a4^{'})}$. Then by (a), \eqref{IE1} for $F^{[m]}$ and $b^{[m]}$ in place of $F$, $b$ has a unique mild solution $Z^{[m]}(t):=(X^{[m]}(t),Y^{[m]}(t))$ starting at $(X_{0},Y_{0})$ which is non-explosive. Let
\beg{equation*}
\zeta_{0}=0,\ \ \zeta_{m}=m\wedge\inf\{t\geq 0:|(X^{[m]}(t),Y^{[m]}(t))|\geq m\},\ \ m\geq 1.
\end{equation*}
Since $F^{[m]}(s,\xi)=F(s,\xi)$ and $b^{[m]}(s,\xi(0))=b(s,\xi(0))$ hold for $s\leq m$, and $\|\xi\|_{\C_\nu}\leq m$,  by Proposition \ref{P-uni}, for any $n$, $m\geq1$, we have $Z^{[m]}(t)=Z^{[n]}(t)$ for $t\in[0,\zeta_{m}\wedge\zeta_{n}]$. In particular, $\zeta_{m}$ is increasing in $m$. Let $\zeta=\lim_{m\to\infty}\zeta_{m}$ and
\beg{equation*}
Z(t)=\sum_{m=1}^{\infty}1_{[\zeta_{m-1},\zeta_{m})}Z^{[m]}(t),\ \ t\in[0,\zeta).
\end{equation*}
Then it is easy to see that $Z(t)_{t\in[0,\zeta)}$ is a mild solution to \eqref{IE1} with lifetime $\zeta$ and, due to Proposition \ref{P-uni}, the mild solution is unique. So we end the proof of Theorem \ref{T-EU-NE} (1).

(c) Next, we prove the non-explosion.

Let $\Phi, h$ satisfy \eqref{Bb-phi}. Let $(Z(t))_{t\in(-\infty,\zeta)}$ be the mild solution to \eqref{IE1} with lifetime $\zeta$. Set $M(t)=\int_{0}^{t} \e^{A_2(t-s)}Q(s)\d W(s)$, $t\in[0,\zeta)$; $M(t)=0$, $t\in(-\infty,0]$.
It is clear that $(X(t),\tilde{Y}(t)):=(X(t), Y(t)-M(t))$ is the mild solution to the following equation up to $\zeta$,
\beg{equation*}
\begin{cases}
\d X(t)= \{A_1 X(t)+B (\tilde{Y}(t)+M(t))\}\d t, \\
\d \tilde{Y}(t)=\{A_2 \tilde{Y}(t)+b(t,X(t),\tilde{Y}(t)+M(t))+F(t,X_t,\tilde{Y}_t+M_t)\}\d t,
\end{cases}
\end{equation*}
Since $A_2$ is negative definite, then \eqref{Bb-phi} and Ito's formula imply that for any $T>0$,
\beg{equation}\label{Y-phi}\beg{split}
\d |\tilde{Y}(t)|^{2}&\leq 2\langle \tilde{Y}(t), b(t,X(t),\tilde{Y}(t)+M(t))+F(t,X_t,\tilde{Y}_{t}+M_{t})\rangle\d t\\
&\leq2\left(\Phi_{\zeta\wedge T}(\|X_{t}\|_{\C^1_\nu}^{2}+\|\tilde{Y}_{t}\|_{\C^2_\nu}^{2})+h_{\zeta\wedge T}(\|M_{t}\|_{\C^2_\nu})\right)\d t,\ \ t\in[0,\zeta\wedge T).
\end{split}\end{equation}
This yields that
\beg{equation}\label{XX}\beg{split}
|\tilde{Y}(t)|^{2}&\leq |Y(0)|^{2}+2\int_{0}^{t}h_{T}(\|M_{s}\|_{\C^2_\nu})\d s\\
&+2\int_{0}^{t}\Phi_{T}(\|X_{s}\|_{\C^1_\nu}^{2}+\|\tilde{Y}_{s}\|_{\C^2_\nu}^{2})\d s, \  \ t\in [0, T\land \zeta).
\end{split}\end{equation}
 Since $\tilde{Y}_0=Y_0$,    \eqref{nu} implies
\beg{equation*}\beg{split} \|\tilde{Y}_s\|_{\C^2_\nu}^2 & =  |\tilde{Y}(s)|^2 + \int_{-\infty}^{-s} |\tilde{Y}(s+\theta)|^2\nu(\d \theta) + \int_{-s}^{0} |\tilde{Y}(s+\theta)|^2\nu(\d \theta)\\
&\le \big\{1+\nu([-s,0))\big\} \sup_{r\in [0,s]} |\tilde{Y}(r)|^2 +\kk(s) \int_{-\infty}^0 |Y_0(\theta)|^2 \nu(\d\theta)\\
&\le \kk(T) \|Y_0\|_{\C^2_\nu}^2 + \big\{1+\nu([-T,0)) \big\}\sup_{r\in [0,s]}|\tilde{Y}(r)|^2,\ \ s\in [0, T\land \zeta).\end{split}\end{equation*}
Similarly, we have
\begin{equation}\label{x-in}\begin{split}
\|X_s\|^2_{\C^1_\nu}&\le \kk(T) \|X_0\|_{\C^1_\nu}^2 + \big\{1+\nu([-T,0)) \big\}\sup_{r\in [0,s]}|X(r)|^2,\ \ s\in[0, T\land \zeta).
\end{split}\end{equation}
On the other hand, since $(X,Y)$ is the solution to \eqref{IE1}, it is clear that
\begin{equation*}\begin{split}
\sup_{r\in [0,s]}|X(r)|^2&\leq \|X_0\|^2_{\C^1_\nu}+\sup_{t\in[0,s]}\left|\int_0^t\e^{A_1(t-v)}B (\tilde{Y}(v)+M(v))\d v\right|^2,\ \ s\in[0, T\land \zeta).
\end{split}\end{equation*}
Combining this with \eqref{x-in}, there exists a random variable $N>1$ such that
\begin{equation}\label{x-in'}\begin{split}
\|X_s\|^2_{\C^1_\nu}&\le  N+(N-1)\sup_{r\in [0,s]}|\tilde{Y}(r)|^2,\ \ s\in[0, T\land \zeta).
\end{split}\end{equation}
So, by letting
\beg{equation*}\beg{split}
\aa(T)= |Y(0)|^{2}+2\int_{0}^{T}h_{T}(\|M_{s}\|_{\C^2_\nu})\d s,\end{split}\end{equation*}
\eqref{XX} implies
\beg{equation}\label{sup-Y}
\sup_{r\in [0,s]}|\tilde{Y}(r)|^2\leq \alpha(T)+2\int_{0}^{s}\Phi_{T}\left(N\sup_{r\in [0,q]}|\tilde{Y}(r)|^2+N\right)\d q, \ \ s\in[0,\zeta\wedge T)
\end{equation}
for some random variable $N$.
Let $$\Psi_{T}(s)=\int_{1}^{s}\frac{\d v}{2\Phi_{T}(N+Nv)}.$$
By Bihari-LaSalle inequality, \eqref{sup-Y} implies
\beq\label{Z-psi}
\sup_{r\in [0,t]}|\tilde{Y}(r)|^2\leq \Psi_{T}^{-1}(\Psi_{T}(\alpha(T))+t),\ \ t\in[0,\zeta\wedge T).
\end{equation}
Moreover, since $M(t)$ is continuous, then $\sup_{t\in[0, T)}|M(t)|^{2}<\infty$. Thus by the definition of $\zeta$ and $\tilde{Y}$, on the set $\{\zeta<\infty\}$, we have $\mathbb{P}$-a.s.
\beq\label{lim-YX}
\limsup_{t\uparrow\zeta}(|\tilde{Y}(t)|^2+|X(t)|^2)=\limsup_{t\uparrow\zeta}(|X(t)|^2+|Y(t)|^2)=\infty.
\end{equation}
Moreover on the set $\zeta\leq T$, $\mathbb{P}$-a.s. $\alpha(T)<\infty$. Combining the property of $\Phi$, \eqref{x-in'}, \eqref{Z-psi} and \eqref{lim-YX}, it holds that on the set $\zeta\leq T$, $\mathbb{P}$-a.s.
\beg{equation*}
\limsup_{t\uparrow\zeta}(|\tilde{Y}(t)|^2+|X(t)|^2)\leq N\Psi_{T}^{-1}(\Psi_{T}(\alpha(T))+T)+N<\infty.
\end{equation*}
So for any $T>0$, $\mathbb{P}\{\zeta\leq T\}=0$. Note that
\beg{equation*}
 \mathbb{P}\{\zeta< \infty\}=\mathbb{P}\left(\bigcup_{m=1}^{\infty}\{\zeta\leq m\}\right)\leq\sum_{m=1}^{\infty}\mathbb{P}\{\zeta\leq m\}=0,
 \end{equation*}
and this implies the solution of \eqref{IE1} is non-explosive.
\end{proof}
\section{Harnack inequalities}
Throughout of this section, we assume the length of time delay is finite. Since the log-Harnack inequality implies the strong Feller property (see \cite[Theorem 1.4.1]{Wbook}), and it is easy to
 see that $P_T$ is strong Feller only if supp$\,\nu\subset [-T,0]$, we see that the restriction on bounded time delay is essential for the study, see also \cite{WY}.
 Let $r\in (0,\infty)$ such that supp$\,\nu\subset [-r,0]$. In this case $\C_\nu$ is reformulated as
$$\C_\nu=\bigg\{\xi: [-r,0]\to \mathbb{H} \ \text{is\ measurable\ with}\ \nu(|\xi|^2):=\int_{-r}^0 |\xi(\theta)|^2\nu(\d\theta)<\infty\bigg\}.$$
In this section, we consider the following functional SPDEs:
\beq\label{EH}
\begin{cases}
\d X(t)= \{A_1 X(t)+B Y(t)\}\d t, \\
\d Y(t)=\{A_2 Y(t)+b(X(t),Y(t))+F(X_t,Y_t)\}\d t+Q\d W(t),
\end{cases}
\end{equation}
where $B\in\L\left(\mathbb{H}_2; \mathbb{H}_1\right)$, for any $i=1,2$, $(A_i,\D(A_i))$ is a bounded above linear operator generating a strongly continuous semigroup $\e^{tA_i}$ on $\mathbb{H}_i$, $F: \C_{\nu}\to \mathbb{H}_2$, $b: \mathbb{H}\to \mathbb{H}_2$ and $Q\in \L\left(\mathbb{H}_3; \mathbb{H}_2\right)$.
We make the following assumptions:

\beg{enumerate}
\item[{\bf (A2)}]
\beg{enumerate}
\item[(i)]  $Q Q^{\ast}$ is invertible.

\item[(ii)]  $BB^\ast$ is invertible in $\mathbb{H}_1$, and $B\e^{A_2 t} = \e^{A_1 t} \e^{A_0 t} B$ for some $A_0 \in \L(\mathbb{H}_1,\mathbb{H}_1)$ and all $t\geq 0$.

\item[(iii)] There exists $n_0 \geq1$ such that for any $n \geq n_0$, $\pi^{(n)}_1 B = B\pi^{(n)}_2$ on $\mathbb{H}_2$, and $\pi^{(n)}_1 A_1 =
A_1\pi^{(n)}_1$ on $\D(A_1)$.
\end{enumerate}
\item[$\bf{(A3)}$] $\|b\|_{\infty}<\infty$. There exists $\phi\in\D$ and constants $K>0$, $\alpha\in(\frac{2}{3},1)$ such that

and for any $ x,x'\in\mathbb{H}_1, y,y'\in\mathbb{H}_2$,
\beq\label{b-phi0'}
|b(x,y)-b(x',y')|\leq K|x-x'|^{\alpha}+\phi(|y-y'|).
\end{equation}
\item[$\bf{(A4)}$] There exists a constant $c>0$ such that
\beg{equation*}\beg{split}
|F(\xi)-F(\eta)|\leq c\|\xi-\eta\|_{\C_\nu},\ \ \xi,\eta\in\C_\nu.
\end{split}\end{equation*}

\item[$\bf{(A5)}$] Let $A_0$ be in {\bf(A2)} (ii). Assume $$\Lambda_t:= \int_0^t\e^{sA_0} BB^*\e^{s A_0^*}\d s,\ \ t\geq 0$$
is invertible on $\mathbb{H}_1$. Moreover, $A_1\leq \delta-\lambda_1$ for some constant $\delta>0$; i.e., $\<A_1 x,x\>\leq (\delta-\lambda_1)|x|^2$ for all $x \in\D(A_1)$, where $\lambda_1$ is in {\bf (a1)}.
\end{enumerate}

\begin{rem}\label{non-ex} Obviously, when $\mathbb{H}_1 = \mathbb{H}_2=\mathbb{H}_3$, $Q= B = I$, $A_1 = A_2$ and {\bf(a1)} holds, then {\bf(A2)} and {\bf(A5)} hold.
See \cite{W2} for more examples, where $\mathbb{H}_2$ might be a subspace of $\mathbb{H}_1$.
\end{rem}
 By Theorem \ref{T-EU-NE}, {\bf(a1)}, {\bf(A2)}-{\bf(A4)} imply that \eqref{EH} has a unique non-explosive mild solution. {\bf{(A5)}} will be used to prove the Harnack and shift Harnack inequalities in the next section. Let
$(X^{(\xi_1,\xi_2)}(t), Y^{(\xi_1,\xi_2)}(t))$ be the unique continuous mild solution for initial point $(\xi_1,\xi_2)\in\C_\nu$. Let $P_t$ be the Markov semigroup generated
by the segment solution, i.e.
$$P_t f(\xi_1,\xi_2)=\mathbb{E} f(X^{(\xi_1,\xi_2)}_t, Y^{(\xi_1,\xi_2)}_t),\ \ t\geq 0, f\in\B_b(\C_\nu).$$

We will use the coupling constructed in \cite[Lemma 4.1]{BWY15} to derive the Harnack inequalities.
\begin{thm}\label{THar} Assume {\bf (a1)},
  {\bf (A2)}-{\bf (A5)} and let $T>r$. Then for any $\xi=(\xi_1, \xi_2), h=(h_1, h_2)\in \C_\nu$ and positive
$f\in \B_b(\C_\nu)$,
\beg{equation*}\beg{split}
&P_T\log f(\xi+h)\leq\log P_T f(\xi)+ \Sigma(T,h,r)
,\end{split}\end{equation*}
and
 \beg{equation*}\beg{split}  (P_Tf)^p(\xi+h)\le  &P_Tf^p(\xi)
  \exp\bigg[\ff{p}{2(p-1)} \Sigma(T,h,r)\bigg],
  \end{split}\end{equation*}
where
\begin{align*}
\Sigma(h,T,r)&=C(T-r)\Big(\ff 1 {T-r}|h_2(0)|+ \|B\||h(0)|\Big)^2\\
&+CT(|h_1(0)|+ \|B\||h(0)|)^{2\alpha}\\
&+CT\phi^2\left(C(|h_2(0)|+ \|B\||h(0)|)\right)\\
&+CT\Big(\|h\|_{\C_\nu}+\|B\||h(0)|\Big)^2,
\end{align*}
and $C>0$ is a constant. In addition, since $\lim_{\|h\|_{\C_\nu}\to0}\Sigma(h,T,r)=0$, $P_T$ is strong Feller for any $T>r$. 
 \end{thm}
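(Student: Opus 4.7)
The plan is to prove Theorem \ref{THar} by a coupling by change of measure adapted from \cite[Lemma 4.1]{BWY15}, driving the degenerate $Y$-component to couple with the target process before the terminal time so that the full segments coincide at time $T$. Let $(X,Y)$ be the mild solution of \eqref{EH} starting from $\xi$ on $(\OO, \F, \mathbb{P})$, and given $h=(h_1,h_2)\in \C_\nu$, I will construct deterministic paths $V:[0,T]\to \mathbb{H}_1$ and $U:[0,T]\to \mathbb{H}_2$ with $V(0)=h_1(0)$, $U(0)=h_2(0)$, $V(t)=U(t)=0$ for $t\in[T-r,T]$, and the compatibility relation $V'(t)=A_1 V(t)+B U(t)$ on $[0,T-r]$. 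The existence of such a pair is a linear controllability problem for $(A_1,B)$ on $[0,T-r]$, solvable precisely because assumption (A5) makes $\LL_{T-r}$ invertible; one explicit realization is $U(t)=\ell(t)h_2(0)+B^*\e^{(T-r-t)A_0^*}\LL_{T-r}^{-1} g$ on $[0,T-r]$ with a smooth scalar cutoff $\ell$ satisfying $\ell(0)=1,\ell(T-r)=0$, and $g\in \mathbb{H}_1$ chosen to enforce $V(T-r)=0$.

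Setting $\tilde X := X+V$, $\tilde Y := Y+U$, the pair $(\tilde X,\tilde Y)$ is adapted and starts at $\xi+h$. A direct computation shows that under $\mathbb{P}$,
\[
\d\tilde Y(t)=\{A_2\tilde Y(t)+b(\tilde X(t),\tilde Y(t))+F(\tilde X_t,\tilde Y_t)\}\d t+Q\d W(t)-Q\phi(t)\d t,
\]
where $\phi(t):=Q^*(QQ^*)^{-1}\eta(t)$, with $\eta(t)=U'(t)-A_2 U(t)+b(X,Y)-b(\tilde X,\tilde Y)+F(X_t,Y_t)-F(\tilde X_t,\tilde Y_t)$ on $[0,T-r]$ and $\eta(t)=F(X_t,Y_t)-F(\tilde X_t,\tilde Y_t)$ on $[T-r,T]$. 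By Girsanov's theorem, $(\tilde X,\tilde Y)$ becomes a mild solution of \eqref{EH} starting at $\xi+h$ under $\tilde{\mathbb{P}}:=R_T\mathbb{P}$, where $R_T=\exp(\int_0^T\langle\phi(s),\d W(s)\rangle-\ff12\int_0^T|\phi(s)|^2\d s)$. Since $V=U=0$ on $[T-r,T]$, the segments at time $T$ satisfy $(\tilde X_T,\tilde Y_T)=(X_T,Y_T)$, yielding $P_T f(\xi+h)=\mathbb{E}[R_T f(X_T,Y_T)]$. The log-Harnack then follows from the entropy inequality $\mathbb{E}[R_T\log f(X_T,Y_T)]\le \log P_T f(\xi)+\mathbb{E}[R_T\log R_T]$, and the power Harnack from H\"older's inequality applied to the same identity; each reduces to a moment estimate on $\int_0^T|\phi|^2\d s$, noting that $\mathbb{E}[R_T\log R_T]=\ff12\mathbb{E}_{\tilde{\mathbb{P}}}[\int_0^T|\phi|^2\d s]$.

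The core task is thus the deterministic bound on $\int_0^T|\phi|^2\d s$. On $[0,T-r]$ the explicit form of $(U,V)$ yields $|U(t)|\lesssim |h_2(0)|+\|B\|\,|h(0)|$, $|U'(t)|\lesssim |h_2(0)|/(T-r)+\|B\|\,|h(0)|$, and $|V(t)|\lesssim |h_1(0)|+(T-r)\|B\|(|h_2(0)|+\|B\|\,|h(0)|)$; combining these with (A3) through $|b(X,Y)-b(\tilde X,\tilde Y)|\le K|V|^{\aa}+\phi(|U|)$ (which is deterministic since $X,Y$ cancel) and with (A4) gives exactly the first three terms of $\Sigma(h,T,r)$. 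On $[T-r,T]$ only the $F$-contribution survives, controlled by $\|\tilde X_t-X_t\|_{\C^1_\nu}+\|\tilde Y_t-Y_t\|_{\C^2_\nu}$; the segment differences pick up both the interior perturbations $V,U$ (on $[T-2r,T-r]$) and the initial data difference (on $[-r,0]$), and a Fubini rearrangement exactly as in the proof of Proposition \ref{P-uni}, using the key bound $\nu(\cdot-t)\le \kk(t)\nu(\cdot)$ from \eqref{nu}, yields the remaining $CT(\|h\|_{\C_\nu}+\|B\|\,|h(0)|)^2$ term. The resulting deterministic bound on $\int_0^T|\phi|^2\d s$ makes the moment estimates for both Harnack inequalities immediate.

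The main obstacle is the first step: producing $(U,V)$ that simultaneously satisfy the two-point boundary conditions, the linear compatibility $V'=A_1 V+BU$, and the sharp norm bounds above; here the spectral estimate $A_1\le \dd-\ll_1$ in (A5) is what prevents $\e^{tA_1}$-growth from contaminating the constants, and the invertibility of $\LL_{T-r}$ is what makes the required $V(T-r)=0$ achievable while keeping $|U|$ controlled. Once the above $\Sigma(h,T,r)$ is obtained, strong Feller at time $T>r$ follows from $\lim_{\|h\|_{\C_\nu}\to 0}\Sigma(h,T,r)=0$ combined with the standard estimate $|P_T f(\xi+h)-P_T f(\xi)|\le \|f\|_\8\sqrt{2\Sigma(h,T,r)}$ derived via Pinsker's inequality from the log-Harnack bound.
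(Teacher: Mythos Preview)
Your overall strategy---coupling by change of measure, forcing the perturbation to vanish on $[T-r,T]$ so the segments agree at time $T$, then Girsanov plus a deterministic bound on $\int_0^T|\phi|^2\,\d s$---is exactly the paper's approach. But your explicit choice of $U$ carries a genuine defect in the infinite-dimensional setting.

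The drift correction you feed into Girsanov is $\eta(t)=U'(t)-A_2U(t)+\cdots$, yet you only bound $|U'(t)|$. Under {\bf(a1)} the operator $-A_2$ has eigenvalues $\ll_n\to\infty$, so $A_2$ is unbounded, and your $U(t)=\ell(t)h_2(0)+B^*\e^{(T-r-t)A_0^*}\LL_{T-r}^{-1}g$ need not lie in $\D(A_2)$: $h_2(0)\in\mathbb{H}_2$ is arbitrary and nothing forces $B^*(\mathbb{H}_1)\subset\D(A_2)$. Hence $A_2U(t)$ is in general undefined, or if defined has no uniform bound; $\int_0^T|\phi|^2\,\d s$ can then be infinite and the Girsanov step collapses. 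This is not a cosmetic issue---it is precisely the obstruction that distinguishes the SPDE case from the finite-dimensional one.

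The paper avoids this by building the $Y$-perturbation in the specific form $\Gamma_2(t)=\e^{A_2t}v(t)$ with $v$ a smooth $\mathbb{H}_2$-valued function of $t$ satisfying $v(0)=h_2(0)$ and $v(T-r)=0$. Then the additive control appearing in the $\bar Y$-equation is $\e^{A_2t}v'(t)$, bounded because $\|\e^{A_2t}\|\le 1$, and no domain condition on $h_2(0)$ is required. The intertwining $B\e^{A_2t}=\e^{A_1t}\e^{A_0t}B$ from {\bf(A2)}(ii) then turns the constraint $\Gamma_1(T-r)=0$ into a controllability problem for the \emph{bounded} pair $(A_0,B)$, solved via the weighted Gramian $\bar\LL_{T-r}=\int_0^{T-r}s(T-r-s)\e^{A_0s}BB^*\e^{A_0^*s}\,\d s$. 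Your construction can be repaired by passing to this ansatz; once you do, the rest of your outline (the $b$ and $F$ estimates via {\bf(A3)}--{\bf(A4)}, the segment bound using \eqref{nu}, and the entropy/H\"older finish) is correct and matches the paper.
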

\begin{proof}  Fix $T>r$ and $\xi=(\xi_1,\xi_2), h=(h_1,h_2)\in\C_\nu$. For any $\eta\in\C_\nu$, let $(X^\eta(t), Y^\eta(t))$ solve (\ref{EH})  with $(X_0,Y_0)= \eta$. For simplicity, we set $(X(t),Y(t))=(X^\xi(t), Y^\xi(t))$.
Let $$\gamma(t)=t(T-r-t)^{+}B^\ast\e^{A^\ast_0t}e,$$
and

\begin{align*}
e=-\bar{\Lambda}_{T-r}^{-1} \bigg(h_1(0)+\int_0^{T-r} \ff{T-r-u}{T-r} \e^{A_0u}Bh_2(0)\d u\bigg),
\end{align*}
where
$$\bar{\Lambda}_{T-r}:= \int_0^{T-r} s(T-r-s)\e^{A_0s} BB^*\e^{s A^*_0s}\d s.$$
Let $(\bar{X}(t), \bar{Y}(t))$ solve the equation
\beq\label{EC1} \beg{cases} &\d\bar{X}(t)= \{A_1\bar{X}(t)+B\bar{Y}(t)\}\d t,\\
&\d \bar{Y}(t)= \{A_2\bar{Y}(t)+b(X(t), Y(t))+F(X_t,Y_t)\}\d t +Q \d W(t)\\
&\ \ \ \ \ \ \ \ \ \ \ \ +\e^{A_2t} \left\{-\frac{1_{[0,T-r]}(t)}{T-r}h_2(0)+\gamma'(t)\right\}\d t\end{cases}\end{equation} with
$(\bar{X_0},\bar{Y_0})= \xi+h$. Moreover, let
 $$\Gamma_2(t):= \beg{cases} h_2(t),\ \  &\text{if}\  t\in[-r,0],\\
\e^{A_2t} \left\{\frac{(T-r-t)^+}{T-r}h_2(0)+\gamma(t)\right\}, &\text{if}\
t\in(0,T].\end{cases} $$
and
 $$\Gamma_1(t):= \beg{cases} h_1(t),\ \  &\text{if}\  t\in[-r,0],\\
\e^{tA_1}h_1(0)+\int_0^{t} \e^{(t-u)A_1} B\Gamma_2(u)\d u\}, &\text{if}\
t\in(0,T].\end{cases} $$
Then it is not difficult to see that
\beq\label{EE} (\bar{X}(s),\bar{Y}(s))=(X(s),Y(s))+\Gamma(s), \ \
s\in[-r,T]\end{equation} holds for
 $$\Gamma(s)=(\Gamma_1(s), \Gamma_2(s)), \ \ s\in[-r,T].$$
 In particular, by the definition of $\gamma$,
 we have $\Gamma_T=0$, which implies $$(\bar{X}_T, \bar{Y}_T)=(X_T,Y_T).$$
 Thus
 let
\begin{align*}
\Phi(t)&=b(X(t), Y(t))-b(\bar{X}(t),
\bar{Y}(t))+F(X_t,Y_t)-F(\bar{X}_t,\bar{Y}_t)\\
&+\e^{A_2t} \left\{-\frac{1_{[0,T-r]}(t)}{T-r}h_2(0)+\gamma'(t)\right\}.
\end{align*}
Set
\begin{align*}
R(s)=\exp\bigg[-\int_0^s\< (QQ^\ast)^{-1}\Phi(u), \d
W(u)\>-\frac{1}{2}\int_0^s |(QQ^\ast)^{-1}\Phi(u)|^2\d u\bigg],
\end{align*}
and
$$
\bar{W}(s)=W(s)+\int_0^s(QQ^\ast)^{-1}\Phi(u)\d u.
$$
Then (\ref{EC1}) reduces to
\beq\label{E2'}   \beg{cases} \d \Bar{X}(t)= \{A_1\Bar{X}(t)+B\Bar{Y}(t)\}\d t,\\
\d \Bar{Y}(t)= \{A_2\bar{Y}(t)+b(\Bar{X}(t), \Bar{Y}(t))+F(\bar{X}_t,\bar{Y}_t)\}\d t +Q \d \bar{W}(t).
\end{cases}
\end{equation}
Thus the distribution of $(\bar{X}_T,\bar{Y}_T)$ under $\Q_T=R(T)\P$ coincides with the one of $(X^{\xi+h}_T,Y^{\xi+h}_T)$  under $\P$.
In addition, by the definition of $\gamma(t)$, there exists a constant $C>0$ such that for any $t\in[0,T]$,
\beg{equation}\label{NN0}\beg{split}
&\left|\e^{A_2t} \left\{-\frac{1_{[0,T-r]}(t)}{T-r}h_2(0)+\gamma'(t)\right\}\right|\le C1_{[0,T-r]}(t)\Big(\ff 1 {T-r}|h_2(0)|+ \|B\||h(0)|\Big),\\
&|\Gamma_1(t)|\le C(|h_1(0)|+ \|B\||h(0)|),\\
&|\Gamma_2(t)|\le C(|h_2(0)|+ \|B\||h(0)|),\end{split}\end{equation}
and \eqref{nu} yields that
\beg{equation}\label{NN01}\beg{split}
&\|\Gamma_t\|_{\C_\nu} \le C\Big(\|h\|_{\C_\nu}+\|B\||h(0)|\Big).\end{split}\end{equation}
Thus, from {\bf(A3)}-{\bf(A4)}, there exists a constant $C>0$ such that
\begin{equation}\begin{split}\label{Phi'}
&\int_0^T|\Phi(s)|^2\d s\\
&\leq C\int_0^T\left(|\Gamma_1(s)|^{\alpha}+\phi(|\Gamma_2(s)|)+\|\Gamma_s\|_{\C_\nu}+\left|-\frac{1_{[0,T-r]}(s)}{T-r}h_2(0)+\gamma'(s)\right|\right)^2\d s\\
&\leq CT(|h_1(0)|+ \|B\||h(0)|)^{2\alpha}\\
&+CT\phi^2\left(C(|h_2(0)|+ \|B\||h(0)|)\right)\\
&+CT\Big(\|h\|_{\C_\nu}+\|B\||h(0)|\Big)^2\\
&+C(T-r)\Big(\ff 1 {T-r}|h_2(0)|+ \|B\||h(0)|\Big)^2.
\end{split}\end{equation}

On the other hand, by Young's inequality,
\begin{align*}
P_T \log f(\xi+h)&=\E^{\mathbb{Q}_T}\log f(\bar{X}_T,\bar{Y}_T)\\
&=\E ^{\Q_T}\log f(X^\xi_T,Y^\xi_T)\leq \log P_T f(\xi)+\E R(T)\log R(T),
\end{align*}
and by H\"{o}lder inequality,
\begin{align*}
P_T f(\xi+h)&=\E^{\mathbb{Q}_T}f(\bar{X}_T,\bar{Y}_T)\\
&=\E ^{\Q_T}f(X^\xi_T,Y^\xi_T)\leq (P_T f^p(\xi))^{\frac{1}{p}}\{\E R(T)^{\frac{p}{p-1}}\}^{\frac{p-1}{p}}.
\end{align*}
Since $\bar W$ is a cylindrical Brownian motion under $\Q_T$, by the definition of $R(T)$, it is easy to see that
\begin{align*}
\E R(T)\log R(T)=\E ^{\Q_T}\log R(T)=\frac{1}{2}\E^{\Q_T} \int_0^T |(QQ^\ast)^{-1}\Phi(u)|^2\d u,
\end{align*}
and
\begin{align*}
&\E R(T)^{\frac{p}{p-1}}\\
&\leq \E\Bigg\{\exp\bigg[-\frac{p}{p-1}\int_0^T\< (QQ^\ast)^{-1}\Phi(u), \d
W(u)\>-\frac{1}{2}\frac{p^2}{(p-1)^2}\int_0^s |(QQ^\ast)^{-1}\Phi(u)|^2\d u\bigg]\\
&\ \ \ \ \ \ \ \ \ \times\exp\bigg[\frac{1}{2}\frac{p^2}{(p-1)^2}\int_0^s |(QQ^\ast)^{-1}\Phi(u)|^2\d u-\frac{1}{2}\frac{p}{p-1}\int_0^s |(QQ^\ast)^{-1}\Phi(u)|^2\d u\bigg]\Bigg\}\\
&\leq\mathrm{ess}\sup_{\Omega}\exp\left\{\frac{p}{2(p-1)^2}\int_0^T |(QQ^\ast)^{-1}\Phi(u)|^2\d u\right\}.
\end{align*}
Combining this with {\bf(A2)} (i) and \eqref{Phi'}, we derive the Harnack inequalities. Finally, the strong Feller of $P_T$ for $T>r$ follows from \cite[Theorem 1.4.1 (1)]{Wbook} since $\lim_{\|h\|_{\C_\nu}\to0}\Sigma(h,T,r)=0$.
\end{proof}
The following corollary is a direct conclusion of Theorem \ref{THar}, see \cite[Theorem 1.4.2]{Wbook}.
\begin{cor}\label{density} Let the assumption of Theorem \ref{THar} hold. For any $T>r$, $\xi,\eta\in\C_\nu$, let $\Sigma(T,\eta-\xi,r)$ is defined in Theorem \ref{THar} for $\eta-\xi$ instead of $h$.  Then $P_T(\xi,\cdot)$ is equivalent to $P_T(\eta,\cdot)$. Moreover,  for any $p>1$,
$$P_T\left\{\left(\frac{\d P_T(\xi,\cdot)}{\d P_T(\eta,\cdot)}\right)^{\frac{1}{p-1}}\right\}(\xi)\leq \exp\bigg[\ff{p}{2(p-1)^2} \Sigma(T,\eta-\xi,r)\bigg],$$
and
$$P_T\left\{\log\frac{\d P_T(\xi,\cdot)}{\d P_T(\eta,\cdot)}\right\}(\xi)\leq  \Sigma(T,\eta-\xi,r).$$
In addition, if $\mu$ is an invariant probability measure of $P_t$, then the entropy-cost inequality
$$\mu((P_T^\ast f)\log P_T^\ast f )\leq \mathbb{W}_1^{\Sigma}(f\mu,\mu),\ \  f\geq 0, \mu(f)=1$$
holds for $P_T^\ast$, the adjoint operator of $P_T$ in $L^2(\mu)$, where $\mathbb{W}_1^{\Sigma}$ is the $L^1$ transportation cost induced by the cost function $\Sigma(T,\cdot,r)$, i.e. for any two probability $\mu_1,\mu_2$ on $\C_\nu$, $$ \mathbb{W}_1^{\Sigma}(\mu_1,\mu_2):=\inf_{\pi\in \mathbf{C}(\mu_1,\mu_2)}\int_{\C_\nu\times\C_\nu}\Sigma(T,\eta-\xi,r)\pi(\d\xi,\d \eta),$$
where $\mathbf{C}(\mu_1,\mu_2)$ is the set of all couplings of $\mu_1,\mu_2$.
\end{cor}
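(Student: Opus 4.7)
The plan is to derive all four assertions from the power-Harnack and log-Harnack inequalities of Theorem \ref{THar} by the standard machinery of \cite[Theorem 1.4.2]{Wbook}. The first step is mutual absolute continuity: applying the power-Harnack inequality to $f=1_A$ with $h=\eta-\xi$ gives $P_T(\eta,A)^p\le P_T(\xi,A)\exp[\tfrac{p}{2(p-1)}\Sigma(T,\eta-\xi,r)]$, so $P_T(\xi,A)=0$ implies $P_T(\eta,A)=0$. Swapping $\xi\leftrightarrow\eta$ and exploiting the symmetry $\Sigma(T,h,r)=\Sigma(T,-h,r)$ (each summand depends only on $|h(0)|$, $|h_1(0)|$, $|h_2(0)|$, $\|h\|_{\C_\nu}$) yields the reverse implication. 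Hence $P_T(\xi,\cdot)\sim P_T(\eta,\cdot)$, and $\tilde\rho:=\d P_T(\xi,\cdot)/\d P_T(\eta,\cdot)$ is well defined.

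For the $L^{1/(p-1)}$-density bound I would use the duality identity $\|\tilde\rho\|_{L^{p/(p-1)}(P_T(\eta,\cdot))}=\sup_{\|g\|_{L^p(P_T(\eta,\cdot))}\le 1}\int g\,\tilde\rho\,\d P_T(\eta,\cdot)=\sup P_T g(\xi)$, and then invoke the power-Harnack of Theorem \ref{THar} with $\xi$ and $\eta$ interchanged to estimate each test function: $P_T g(\xi)\le (P_Tg^p(\eta))^{1/p}\exp[\tfrac{1}{2(p-1)}\Sigma(T,\xi-\eta,r)]$. Taking the supremum over $g$ with $\|g\|_{L^p(P_T(\eta,\cdot))}\le 1$ and raising to the power $p/(p-1)$ produces $P_T\{\tilde\rho^{1/(p-1)}\}(\xi)=\|\tilde\rho\|_{L^{p/(p-1)}(P_T(\eta,\cdot))}^{p/(p-1)}\le\exp[\tfrac{p}{2(p-1)^2}\Sigma(T,\eta-\xi,r)]$, which is exactly the stated bound.

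The log-density bound is obtained by applying the log-Harnack of Theorem \ref{THar}, again with $\xi$ and $\eta$ swapped, to the test function $f=\tilde\rho$. Since $P_T f(\eta)=\int\tilde\rho\,\d P_T(\eta,\cdot)=1$, the log-Harnack collapses to $P_T\log f(\xi)=\int\log\tilde\rho\,\d P_T(\xi,\cdot)\le \log 1+\Sigma(T,\xi-\eta,r)=\Sigma(T,\eta-\xi,r)$, which is the desired estimate. A minor technical point, handled by truncating $\tilde\rho$ by $\tilde\rho\wedge N$ and passing to the limit $N\to\infty$ via monotone convergence, is needed to apply the log-Harnack to an unbounded $f$.

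Finally, the entropy-cost inequality combines invariance with the joint convexity of relative entropy. Using $P_T^*\mu=\mu$, I would rewrite $\mu((P_T^*f)\log P_T^*f)=\Ent(P_T^*(f\mu)\,|\,P_T^*\mu)$. For any coupling $\pi\in\mathbf{C}(f\mu,\mu)$ the decompositions $P_T^*(f\mu)(\d z)=\int P_T(\xi,\d z)\pi(\d\xi,\d\eta)$ and $\mu(\d z)=\int P_T(\eta,\d z)\pi(\d\xi,\d\eta)$ combined with joint convexity of $\Ent(\cdot\,|\,\cdot)$ give $\Ent(P_T^*(f\mu)\,|\,\mu)\le \int \Ent(P_T(\xi,\cdot)\,|\,P_T(\eta,\cdot))\,\pi(\d\xi,\d\eta)$. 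The integrand is exactly $P_T\{\log(\d P_T(\xi,\cdot)/\d P_T(\eta,\cdot))\}(\xi)$, which the previous step bounds by $\Sigma(T,\eta-\xi,r)$. Taking infimum over $\pi\in\mathbf{C}(f\mu,\mu)$ yields the entropy-cost inequality. I do not anticipate a genuine obstacle: the main thing to keep track of is the symmetry of $\Sigma$ in $h\mapsto -h$, so that swapping roles of $\xi$ and $\eta$ does not introduce new constants.
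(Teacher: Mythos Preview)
Your proposal is correct and follows exactly the approach the paper intends: the paper gives no detailed proof for this corollary, stating only that it is ``a direct conclusion of Theorem~\ref{THar}, see \cite[Theorem 1.4.2]{Wbook}'', and you have simply filled in the standard duality/convexity arguments from that reference. Your observation that $\Sigma(T,h,r)=\Sigma(T,-h,r)$ (since each term depends only on $|h_1(0)|$, $|h_2(0)|$, $|h(0)|$, $\|h\|_{\C_\nu}$) is precisely what is needed to swap $\xi$ and $\eta$ freely.
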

\section{Shift Harnack inequalities}
In this section, due to some technique difficulty for the construction of coupling by change of measure in the finite dimension, we assume supp $\nu$ is $\{0\}$, i.e. the case without delay. In other words, $\C_\nu=\mathbb{H}$ and we consider the following SPDEs:
\beq\label{ESH}
\begin{cases}
\d X(t)= \{A_1 X(t)+B Y(t)\}\d t, \\
\d Y(t)=\{A_2 Y(t)+b(X(t),Y(t))+F(X(t),Y(t))\}\d t+Q\d W(t),
\end{cases}
\end{equation}
The main result on the shift Harnack inequality for $P_T, T>0$ is the following theorem.
\begin{thm}\label{TsHar} Assume $\mathrm{supp}$ $\nu$ is $\{0\}$. For any $\eta=(\eta_1, \eta_2)\in \mathbb{H}$, let
  $$\eta(t)=(\eta_1(t),\eta_2(t))=\left(\int_0^t\e^{sA_1}\eta_1\d s,\int_0^t\e^{sA_2}\eta_2\d s\right),\ \ t\geq 0.$$
Assume {\bf(a1)},
  {\bf (A2)}-{\bf (A5)} and let $T>0$. Then for any $\xi=(\xi_1, \xi_2)\in \mathbb{H}$ and any positive $f\in \B_b(\mathbb{H})$,
\beg{equation*}\beg{split}
P_T\log f(\xi)\leq\log (P_T f(\eta(T)+\cdot))(\xi)+ \beta(T,\eta),\end{split}\end{equation*}
and
 \beg{equation*}\beg{split}  (P_Tf)^p(\xi)\le  &P_T(f^p(\eta(T)+\cdot))(\xi)
  \exp\bigg[\ff{p}{2(p-1)} \beta(T,\eta)\bigg],\end{split}\end{equation*}
where
\begin{align*}
\beta(T,\eta)&= CT\|B\|^{2\alpha}(T^2|\eta_2|+ T^3\|B\||\eta|)^{2\alpha}\\
&+CT\phi^2\left( C(T|\eta_2|+ T^2\|B\||\eta|)\right)\\
&+CT\left[\left( T|\eta_2|+ T^2\|B\||\eta|\right)^2+\left(\|B\|(T^2|\eta_2|+ T^3\|B\||\eta|)\right)^2\right]\\
&+CT\Big(|\eta _2|+ \|B\||\eta|\Big)^2.
\end{align*}
and $C>0$ is a constant.
 \end{thm}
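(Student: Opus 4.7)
The plan is to mirror the coupling-by-change-of-measure proof of Theorem \ref{THar}, but arrange the coupled pair $(\bar X,\bar Y)$ to start at the same point as $(X,Y):=(X^\xi,Y^\xi)$ and end at time $T$ offset by exactly $\eta(T)=(\eta_1(T),\eta_2(T))$. In this way the shift appears in the final argument of $f$ rather than in the initial condition. Concretely I would set
\begin{equation*}
\bar X(t)=X(t)+\Gamma_1(t),\ \ \bar Y(t)=Y(t)+\Gamma_2(t),\ \ t\in[0,T],
\end{equation*}
with deterministic $\Gamma_1,\Gamma_2$ satisfying $\Gamma_i(0)=0$, $\Gamma_i(T)=\eta_i(T)$ and the compatibility ODE $\dot\Gamma_1=A_1\Gamma_1+B\Gamma_2$, so that $\dot{\bar X}=A_1\bar X+B\bar Y$ is automatic.

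The construction of $\Gamma_2$ is the analogue of the $\Gamma_2$ built in the proof of Theorem \ref{THar}, except that the boundary data at $t=T$ are now $\eta_1(T)$ and $\eta_2(T)$ rather than $0$. Writing $\Gamma_2(t)=\ff{t}{T}\eta_2(T)+\e^{A_2 t}\gamma(t)$ with $\gamma(0)=\gamma(T)=0$, the requirement $\Gamma_1(T)=\eta_1(T)$ reduces to a single linear equation for $\gamma$ which, as in Theorem \ref{THar}, is solved by taking $\gamma(t)=t(T-t)B^\ast\e^{tA_0^\ast}e$ and determining $e\in\mathbb{H}_1$ via the inverse of a weighted Gramian of the form $\int_0^T t(T-t)\e^{tA_0}BB^\ast\e^{tA_0^\ast}\d t$. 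That Gramian is invertible thanks to $\bf{(A5)}$, and the commutation identity $B\e^{A_2 t}=\e^{A_1 t}\e^{A_0 t}B$ from $\bf{(A2)}$(ii) is what allows $\int_0^T\e^{A_1(T-s)}B\Gamma_2(s)\d s$ to be reformulated in terms of that Gramian. A direct calculation from this explicit form yields, uniformly in $t\in[0,T]$,
\begin{equation*}
|\Gamma_2(t)|\lesssim T|\eta_2|+T^2\|B\||\eta|,\ \ |\Gamma_1(t)|\lesssim T^2\|B\||\eta_2|+T^3\|B\|^2|\eta|,\ \ |\dot\Gamma_2(t)-A_2\Gamma_2(t)|\lesssim |\eta_2|+\|B\||\eta|.
\end{equation*}

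With this $(\bar X,\bar Y)$, $\bar Y$ solves the original $Y$-equation plus the extra drift
\begin{equation*}
\Phi(t):=\dot\Gamma_2(t)-A_2\Gamma_2(t)+b(X,Y)-b(\bar X,\bar Y)+F(X,Y)-F(\bar X,\bar Y).
\end{equation*}
Putting $\d\Q=R(T)\d\P$ with $R(T):=\exp\bigl[-\int_0^T\<(QQ^\ast)^{-1}\Phi,\d W\>-\ff12\int_0^T|(QQ^\ast)^{-1}\Phi|^2\d s\bigr]$, the invertibility in $\bf{(A2)}$(i), the bound $\|b\|_\infty<\infty$, $\bf{(A4)}$ and the three $\Gamma$-estimates above give Novikov's condition, so under $\Q$ the pair $(\bar X,\bar Y)$ is a mild solution of \eqref{ESH} from $\xi$; by Proposition \ref{P-uni} its $\Q$-law coincides with the $\P$-law of $(X^\xi,Y^\xi)$. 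Hence for bounded positive $g$,
\begin{equation*}
P_T g(\xi)=\E^{\Q}g(\bar X(T),\bar Y(T))=\E\big[R(T)g\big(X(T)+\eta_1(T),Y(T)+\eta_2(T)\big)\big].
\end{equation*}
Taking $g=\log f$ and invoking Young's entropy inequality produces the log-shift Harnack, while H\"older's inequality with exponent $\ff{p}{p-1}$ produces the $L^p$-shift Harnack; the quantities $\E R\log R$ and $\E R^{p/(p-1)}$ are both controlled by a constant multiple of $\int_0^T|\Phi|^2\d s$, into which substituting the three pointwise bounds above together with $\bf{(A3)}$--$\bf{(A4)}$ reproduces exactly the four contributions in $\beta(T,\eta)$.

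The main technical obstacle is the construction of $\Gamma_2$ itself: it must simultaneously hit the two boundary targets $\eta_1(T)$ and $\eta_2(T)$ through the single ``control channel'' given by $B$ acting on $\Gamma_2$, while keeping the residual drift $|\dot\Gamma_2-A_2\Gamma_2|$ of linear order in $|\eta_2|+\|B\||\eta|$ so that the constant in $\beta(T,\eta)$ does not blow up. This is precisely where $\bf{(A2)}$(ii) and the Kalman-type invertibility assumption $\bf{(A5)}$ are both indispensable; once that linear-algebra step is settled, the remainder is a routine Girsanov/Young/H\"older computation entirely parallel to Theorem \ref{THar}.
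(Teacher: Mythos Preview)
Your proposal is correct and follows essentially the same coupling-by-change-of-measure strategy as the paper: both construct a deterministic offset $(\Gamma_1,\Gamma_2)$ with $\Gamma(0)=0$, $\Gamma(T)=\eta(T)$, solve the $\Gamma_1$-compatibility constraint via a weighted Gramian $\int_0^T t(T-t)\e^{tA_0}BB^\ast\e^{tA_0^\ast}\d t$ (invertible by {\bf(A5)}), and then run Girsanov plus Young/H\"older exactly as in Theorem~\ref{THar}. The only cosmetic difference is the ``baseline'' piece of $\Gamma_2$: the paper takes $\tilde\Gamma_2(t)=\eta_2(t)+\e^{A_2t}\tilde\gamma(t)$, which makes the residual drift collapse cleanly to $\eta_2+\e^{A_2t}\tilde\gamma'(t)$, whereas your linear interpolation $\tfrac{t}{T}\eta_2(T)$ produces an extra term $\tfrac{t}{T}A_2\eta_2(T)=\tfrac{t}{T}(\e^{TA_2}\eta_2-\eta_2)$, which is still bounded by $2|\eta_2|$ and hence leads to the same $\beta(T,\eta)$.
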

\begin{proof} Fix $T>0$ and $\xi=(\xi_1,\xi_2), \eta=(\eta_1,\eta_2)\in\mathbb{H}$. For any $h\in\mathbb{H}$, let $(X^h(t), Y^h(t))$ solve (\ref{ESH})  with $(X(0),Y(0))= h$. For simplicity, we set $(X(t),Y(t))=(X^\xi(t), Y^\xi(t))$.
Let $$\tilde{\gamma}(t)=t(T-t)B^\ast\e^{A^\ast_0t}\tilde{e},\ \ t\in[0,T],$$
where
\begin{align*}
\tilde{e}=\tilde{\Lambda}_{T}^{-1} \bigg(\int_0^T\e^{-A_1s}\eta_1\d s-\int_0^{T}\e^{-A_1u}B\eta_2(u)\d u\bigg),
\end{align*}
and
$$\tilde{\Lambda}_{T}:= \int_0^{T} s(T-s)\e^{A_0s} BB^*\e^{A^*_0s}\d s.$$
Let $(\tilde {X}(t), \tilde{Y}(t))$ solve the equation
\beq\label{EC1s} \beg{cases} &\d \tilde{X}(t)= \{A_1\tilde{X}(t)+B\tilde{Y}(t)\}\d t,\\
&\d \tilde{Y}(t)= \{A_2\tilde{Y}(t)+b(X(t), Y(t))+F(X(t),Y(t))\}\d t +Q \d W(t)\\
&\ \ \ \ \ \ \ \ \ \ \ \ + \left\{\eta_2+\e^{A_2 t}\tilde{\gamma}'(t)\right\}\d t\end{cases}\end{equation} with
$(\tilde{X}(0),\tilde{Y}(0))= \xi$.
Moreover, let
 $$\tilde{\Gamma}_2(t)= \eta_2(t)+\e^{A_2t}\tilde{\gamma}(t),\ \ \tilde{ \Gamma}_1(t)=
\int_0^{t} \e^{(t-u)A_1} B\tilde{\Gamma}_2(u)\d u. $$
Then it is not difficult to see that
\beq\label{EEs} (\tilde{X}(s),\tilde{Y}(s))=(X(s),Y(s))+\tilde{\Gamma}(s), \ \
s\in[0,T]\end{equation} holds for
 $$\tilde{\Gamma}(s)=(\tilde{\Gamma}_1(s), \tilde{\Gamma}_2(s)), \ \ s\in[0,T].$$
 In particular, by the definition of $\tilde{\gamma}$,
 we have $\tilde{\Gamma}(T)=(\eta_1(T),\eta_2(T))$, which implies $$(\tilde{X}(T), \tilde{Y}(T))=(X(T)+\eta_1(T),Y(T)+\eta_2(T)).$$
 Thus
 let
\begin{align*}
\tilde{\Phi}(t)&=b(X(t), Y(t))-b(\tilde{X}(t),
\tilde{Y}(t))\\
&+F(X(t),Y(t))-F(\tilde{X}(t),\tilde{Y}(t))+\left\{\eta_2+\e^{A_2 t}\tilde{\gamma}'(t)\right\}.
\end{align*}
Set
\begin{align*}
\tilde{R}(s)=\exp\bigg[-\int_0^s\< (QQ^\ast)^{-1}\tilde{\Phi}(u), \d
W(u)\>-\frac{1}{2}\int_0^s |(QQ^\ast)^{-1}\tilde{\Phi}(u)|^2\d u\bigg],
\end{align*}
and
$$
\tilde{W}(s)=W(s)+\int_0^s(QQ^\ast)^{-1}\tilde{\Phi}(u)\d u.
$$
Then (\ref{EC1s}) reduces to
\beq\label{E2s}   \beg{cases} \d \tilde X(t)= \{A_1\tilde X(t)+B\tilde Y(t)\}\d t,\\
\d \tilde Y(t)= \{A_2\tilde Y(t)+b(\tilde X(t), \tilde Y(t))+F(\tilde X(t),\tilde Y(t))\}\d t +Q \d \tilde{W}(t).
\end{cases}
\end{equation}
Thus the distribution of $(\tilde X(T),\tilde Y(T))$ under the new probability $\tilde{\Q}_T=\tilde{R}(T)\P$ coincides with the one of $(X(T),Y(T))$  under $\P$.
Moreover, there exists a constant
$C>0$ such that for any $s\in[0,T]$,
\beg{equation}\label{NN0s}\beg{split}
&\left|\eta_2+\e^{A_2 t}\tilde{\gamma}'(t)\right|\le C\Big(|\eta _2|+ \|B\||\eta|\Big),\\
&|\tilde{\Gamma}_1(s)|\le C\|B\|(T^2|\eta_2|+ T^3\|B\||\eta|),\\
&|\tilde{\Gamma}_2(s)|\le C(T|\eta_2|+ T^2\|B\||\eta|)\\
&|\tilde{\Gamma}(s)|\le C(T|\eta_2|+ T^2\|B\||\eta|)+C\|B\|(T^2|\eta_2|+ T^3\|B\||\eta|).\end{split}\end{equation}
Thus, from {\bf(A3)}-{\bf(A4)}, it holds
\begin{equation}\begin{split}\label{Phis}
\int_0^T|\tilde{\Phi}(s)|^2\d s&\leq C\int_0^T\left(|\tilde{\Gamma}_1(s)|^{\alpha}+\phi(|\tilde{\Gamma}_2(s)|)+|\tilde{\Gamma}(s)|+\left|\eta_2+\e^{A_2 s}\tilde{\gamma}'(s)\right|\right)^2\d s\\
&\leq CT\|B\|^{2\alpha}(T^2|\eta_2|+ T^3\|B\||\eta|)^{2\alpha}\\
&+CT\phi^2\left( C(T|\eta_2|+ T^2\|B\||\eta|)\right)\\
&+CT\left[\left( T|\eta_2|+ T^2\|B\||\eta|\right)^2+\left(\|B\|(T^2|\eta_2|+ T^3\|B\||\eta|)\right)^2\right]\\
&+CT\Big(|\eta _2|+ \|B\||\eta|\Big)^2.
\end{split}\end{equation}
On the other hand, by Young's inequality,
\begin{align*}
P_T \log f(\xi)&=\E^{\tilde{\Q}_T}\log f(\tilde{X}(T),\tilde{Y}(T))\\
&=\E ^{\tilde{\Q}_T}\log f(X(T)+\eta_1(T),Y(T)+\eta_2(T))\\
&\leq \log P_T f(\cdot+\eta)(\xi)+\E \tilde{R}(T)\log \tilde{R}(T),
\end{align*}
and by H\"{o}lder inequality,
\begin{align*}
P_T f(\xi)&=\E^{\tilde{\Q}_T}f(\tilde{X}(T),\tilde{Y}(T))\\
&=\E ^{\tilde{\Q}_T}f(X(T)+\eta_1(T),Y(T)+\eta_2(T))\leq (P_T f^p(\cdot+\eta))^{\frac{1}{p}}(\xi)\{\E \tilde{R}(T)^{\frac{p}{p-1}}\}^{\frac{p-1}{p}}.
\end{align*}
Similarly to the estimate of $R(T)$ in section 3, it is easy to see that
\begin{align*}
\E \tilde{R}(T)\log \tilde{R}(T)=\E ^{\tilde{\Q}_T}\log \tilde{R}(T)=\frac{1}{2}\E^{\tilde{\Q}_T} \int_0^T |(QQ^\ast)^{-1}\tilde{\Phi}(u)|^2\d u.
\end{align*}
and
\begin{align*}
\E \tilde{R}(T)^{\frac{p}{p-1}}\leq\mathrm{ess}\sup_{\Omega}\exp\left\{\frac{p}{2(p-1)^2}\int_0^T |(QQ^\ast)^{-1}\tilde{\Phi}(u)|^2\d u\right\}.
\end{align*}
Thus the shift Harnack inequality follows from {\bf(A2)} (i) and \eqref{Phis}.
\end{proof}

 The following corollary is a direct conclusion of Theorem \ref{TsHar}, see \cite[1.4.2]{Wbook}.
\begin{cor}\label{densitys} Let the assumption of Theorem \ref{TsHar} hold. For any $T>0$, $\xi,\eta\in\mathbb{H}$, let $\beta(T,\eta)$ is defined in Theorem \ref{TsHar}.  Then $P_T(\xi,\cdot)$ is equivalent to $P_T(\xi,\cdot-\eta)$. Moreover,  for any $p>1$,
$$P_T\left\{\left(\frac{\d P_T(\xi,\cdot)}{\d P_T(\xi,\cdot-\eta)}\right)^{\frac{1}{p}}\right\}(\xi)\leq \exp\bigg[\ff{1}{2(p-1)} \beta(T,\eta)\bigg].$$
\end{cor}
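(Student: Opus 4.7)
The plan is to deduce Corollary~\ref{densitys} from the $L^p$ shift Harnack inequality in Theorem~\ref{TsHar} by the standard Radon--Nikodym / duality argument, as in~\cite[Theorem~1.4.2]{Wbook}. Fix $T>0$, $\xi,\eta\in\mathbb{H}$ and $p>1$. Set $\mu:=P_T(\xi,\cdot)$ and define the translated measure $\tilde\mu$ by $\tilde\mu(A):=P_T(\xi,A-\eta)$, so that $\int g\,\d\tilde\mu=\int g(y+\eta)\,\d\mu(y)$ for nonnegative measurable $g$ (here one aligns $\eta$ with the shift vector that appears in Theorem~\ref{TsHar}). Writing $\Psi:=\exp\bigl[\tfrac{p}{2(p-1)}\beta(T,\eta)\bigr]$, the shift Harnack inequality reformulates, for every nonnegative $f\in\B_b(\mathbb{H})$, as
\beg{equation*}
\Big(\int f\,\d\mu\Big)^p\le \Psi\int f^p\,\d\tilde\mu.
\end{equation*}

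First, to establish equivalence, take $f=1_A$ for any Borel $A\subset\mathbb{H}$ to obtain $\mu(A)^p\le\Psi\,\tilde\mu(A)$; therefore $\tilde\mu(A)=0$ forces $\mu(A)=0$, i.e.\ $\mu\ll\tilde\mu$. The reverse absolute continuity $\tilde\mu\ll\mu$ follows by the same argument applied with $-\eta$ in place of $\eta$, since $\beta(T,\cdot)$ is finite for every vector. Consequently $\mu$ and $\tilde\mu$ are equivalent and the density $\rho:=\d\mu/\d\tilde\mu$ is well defined.

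Second, to obtain the bound on $\rho$, I would substitute $f=\rho^{1/p}$ into the shift Harnack inequality via a standard cutoff. Plugging $f_n:=(\rho\wedge n)^{1/p}\in\B_b(\mathbb{H})$ into the inequality produces
\beg{equation*}
\Big(\int(\rho\wedge n)^{1/p}\,\d\mu\Big)^p\le \Psi\int(\rho\wedge n)\,\d\tilde\mu\le \Psi\int\rho\,\d\tilde\mu=\Psi\mu(\mathbb{H})=\Psi.
\end{equation*}
Letting $n\to\infty$ and invoking monotone convergence yields
\beg{equation*}
\int\rho^{1/p}\,\d\mu\le \Psi^{1/p}=\exp\Bigl[\tfrac{1}{2(p-1)}\beta(T,\eta)\Bigr],
\end{equation*}
which is exactly the density estimate claimed in the corollary.

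I do not anticipate any significant difficulty: the proof is a textbook duality computation once Theorem~\ref{TsHar} is available, and the only technical care is the monotone cutoff used to push a non-bounded $f$ into the Harnack inequality. A minor notational point to verify before writing up is the reconciliation of the shift by $\eta$ appearing in the corollary with the shift by $\eta(T)$ used in the statement of Theorem~\ref{TsHar}; this is purely cosmetic and does not affect the argument.
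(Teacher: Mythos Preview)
Your proposal is correct and is precisely the approach the paper intends: the paper gives no proof of its own for this corollary, merely stating that it is a direct consequence of Theorem~\ref{TsHar} via \cite[Theorem~1.4.2]{Wbook}, and your argument is exactly that standard duality computation. Your closing remark about the mismatch between the shift by $\eta$ in the corollary and the shift by $\eta(T)$ in Theorem~\ref{TsHar} is a genuine notational inconsistency in the paper, not a flaw in your reasoning.
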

\paragraph{Acknowledgement.} The authors would like to thank Professor Feng-Yu Wang for corrections and helpful comments.

\beg{thebibliography}{99}

\bibitem{B} K. Bahlali,  \emph{Flows of homeomorphisms of stochastic differential equations with measurable drift,}  Stochastic Rep. 67(1999), 53-82.



\bibitem{BWY} J. Bao, F.-Y. Wang, C. Yuan, \emph{Derivative formula and Harnack inequality for degenerate functionals SDEs,} Stoch. Dyn. 13(2013), 943-951.


\bibitem{BWY15} J. Bao, F.-Y. Wang, C. Yuan,  \emph{Hypercontractivity for Functional Stochastic Partial Differential Equations,} Electron. J. Probab. 20(2015), 1-15.




\bibitem{GW} A. Guillin, F.-Y. Wang, \emph{Degenerate Fokker-Planck equations: Bismut formula, gradient estimate and Harnack inequality,} J. Differential Equations, 253(2012), 20-40.

\bibitem{GM} L. Gy\"{o}ngy, T. Martinez,  \emph{On stochastic differential equations with locally unbounded drift,}  Czechoslovak Math. J. 51(2001), 763--783.

\bibitem{H} Xing Huang,  \emph{Strong Solutions for Functional SDEs with Singular Drift,}  to appear in Stochastic and Dynamics.

\bibitem{HW} X. Huang, F.-Y. Wang, \emph{Functional SPDE with Multiplicative Noise and Dini Drift,}  Ann. Fac. Sci. Toulouse Math., 6(2017), 519-537.


\bibitem{P} E. Priola, \emph{Pathwise Uniqueness for Singular SDEs driven by Stable Processes,} Osaka Journal of Mathematics, 49(2012), 421-447.




\bibitem{FYW} F.-Y. Wang,  \emph{Gradient estimates and applications for SDEs in Hilbert space with multiplicative noise and Dini continuous drift,}  J. Differential Equations, 260 (2016), 2792-2829.

\bibitem{Wbook} F.-Y. Wang, \emph{Harnack Inequality and Applications for Stochastic Partial Differential Equations,} Springer, New York, 2013.

\bibitem{W2} F.-Y. Wang, \emph{Hypercontractivity and Applications for Stochastic Hamiltonian Systems,} J. Func. Anal. 271(2017),  5360-5383.

\bibitem{WY} F.-Y. Wang, C. G. Yuan, \emph{Harnack inequalities for functional SDEs with multiplicative noise and applications,} Stoch. Proc. Appl. 121(2011), 2692-2710.

\bibitem{WZ1} F.-Y. Wang, X. C. Zhang, \emph{Derivative formula and applications for degenerate diffusion semigroups,} J. Math. Pures Appl., 99 (2013), 726-740.

\bibitem{WZ} F.-Y. Wang, X. C. Zhang,  \emph{Degenerate SDE with H\"{o}lder-Dini Drift and Non-Lipschitz Noise Coefficient,} SIAM J. Math. Anal. 48 (2016), 2189-2226.

\bibitem{WZ15} F.-Y. Wang, X. C. Zhang,  \emph{Degenerate SDEs in Hilbert Spaces with Rough Drifts,} arXiv:1501.0415.

\bibitem{YW} T. Yamada, S. Watanabe, \emph{On the uniqueness of solutions of stochastic differential equations,} J. Math. Kyoto Univ. 11(1971), 155-167.

\bibitem{Z1} X. C. Zhang, \emph{Stochastic flows and Bismut formulas for stochastic Hamiltonian systems,}
Stoch. Proc. Appl., 120(2010), 1929-1949.

\bibitem{Z} X. C. Zhang,  \emph{Strong solutions of SDEs with singural drift and Sobolev diffusion coefficients,}  Stoch. Proc. Appl. 115(2005), 1805-1818.


\bibitem{ZV} A. K. Zvonkin,  \emph{A transformation of the phase space of a diffusion process that removes the drift,}  Math. Sb. (1)93 (1974).
\end{thebibliography}

\end{document}